\newtheorem{defi}{Definition}[section]
\newtheorem{theorem}[defi]{Theorem}
\newtheorem{prop}[defi]{Proposition}
\newtheorem{lemma}[defi]{Lemma}
\newtheorem{cor}[defi]{Corollary}
\newtheorem{remark}[defi]{Remark}
\theoremstyle{definition}
\newtheorem{assumption}[defi]{Assumption}
\newtheorem{ex}[defi]{Example}
\newtheorem{alg}[defi]{Algorithm}
\newcommand{\R}{\mathbb{R}}
\newcommand{\C}{\mathbb{C}}
\newcommand{\N}{\mathbb{N}}
\newcommand{\F}{\mathbb{F}}
\DeclareMathOperator{\rk}{rk}
\DeclareMathOperator{\im}{im}
\DeclareMathOperator{\dom}{dom}
\DeclareMathOperator{\Span}{span}
\DeclareMathOperator{\Sp}{sp}
\DeclareMathOperator{\adj}{adj}
\title{Feedback rectifiable pairs and stabilization of switched linear systems
 \thanks{\textbf{Acknowledgments:} HG is partially supported by the BMBF project EIZ-Project number 03SF0693A.}
        }
\author{M.C.\ Honecker \thanks{Control Systems Group, TU Ilmenau, Helmholtzplatz 5, 98693 Ilmenau, Germany} \and H.\ Gernandt \thanks{Fraunhofer IEG, Fraunhofer Research Institution for Energy Infrastructures and Geothermal Systems IEG, Cottbus, Gulbener Straße 23, 03046 Cottbus, Germany and Institute for Mathematical Modelling, Analysis and Computational Mathematics, Bergische~Universität~Wuppertal, Gau\ss stra\ss e~20, 42119 Wuppertal, Germany} \and K.\ Wulff \thanks{Control Systems Group, TU Ilmenau, Helmholtzplatz 5, 98693 Ilmenau, Germany, \texttt{kai.wulff@tu-ilmenau.de} (Corresponding author)}
\and C.\ Trunk \thanks{Applied Functional Analysis Group, TU Ilmenau, Weimarer Straße 25, 98693 Ilmenau, Germany} \and J.\ Reger \thanks{Control Systems Group, TU Ilmenau, Helmholtzplatz 5, 98693 Ilmenau, Germany}}
\begin{document}

\maketitle

\begin{abstract}
We address the feedback design problem for switched linear systems.
In particular we aim to design a switched state-feedback such that the resulting closed-loop subsystems share the same eigenstructure.
To this effect we formulate and analyse the feedback rectification problem for pairs of matrices.
We present necessary and sufficient conditions for the feedback rectifiability of pairs for two subsystems and give a constructive procedure to design stabilizing state-feedback for a class of switched systems. In particular the proposed algorithm provides sets of eigenvalues and corresponding eigenvectors for the closed-loop subsystems that guarantee stability for arbitrary switching.
Several examples illustrate the characteristics of the problem considered and the application of the proposed design procedure.
\end{abstract}
\textit{Keywords:} switched linear systems, feedback stabilization, eigenstructure assignment \vspace{0.5cm}\\

\textit{MSC 2010:} 
	93D15, 
 93C30,  
 	93B52, 
93B55, 

\section{Introduction}
\label{sec:intro}
Switched systems constitute a special kind of hybrid dynamical systems that consist of a family of subsystems with a switching signal that orchestrates between them.
Such systems appear for instance in modelling of mechanical systems, electric power systems, communication networks and intelligent control systems with logic-based controllers \cite{Lib03,SunGe05,SunGe11}. In particular the dynamics of the switched system may be discontinuous at the switching instances.
As such, the dynamics of the overall switched system are much more complex and may exhibit very different dynamical behaviour as can be found in the constituent subsystems.

For more than two decades the stability of switched linear systems has been subject of inspiring research \cite{LibMor99,DecBraPetLen00,ShoWirMasWulKin07,LinAnt09} and
monographs \cite{Lib03,SunGe05,SunGe11}.
However, constructive algebraic conditions for the stability of switched systems and thereby also efficient design-procedures that refrain from numerical tools remain scarce. 

In this paper we consider switched linear systems that switch autonomously in an arbitrary fashion, i.e.\ the switching is not restricted in neither time-domain nor state-space.
This work is dedicated to the study of feedback control design using eigenstructure assignment.
Such design method is well-established for linear time-invariant systems \cite{Moo76, Roppenecker.1990, Liu.1998, RasAhmSou09} and has already seen some successful transfer to switched linear systems in terms of pole-placement design \cite{WulWirSho05, WulWirSho09} and eigenstructure assignment approaches
\cite{BajFloKou13, Bajcinca.2015, HaiBra13, Hai16, WulHR21}.
In \cite{BajFloKou13, Bajcinca.2015}
subsystems that share a common right invariant eigenspace are considered and assign the left eigenspace of the closed-loop subsystems to achieve stability for arbitrary switching.
In \cite{HaiBra13, Hai16} simultaneous triangularization of the subsystems is achieved for subsystems with input matrices with transverse subspaces.
Both restrictions have been dropped in the approach presented in \cite{WulHR21}, where a simultaneous eigenstructure assignment for two third-order subsystems is proposed.
Thereby the closed-loop subsystems are simultaneously diagonalizable which ensures the existence of a common quadratic Lyapunov function for the Hurwitz subsystems \cite{MorMorKur98,ShoNar98} and hence, the closed-loop system is asymptotically stable for arbitrary switching \cite{Lib03}.

Assigning the eigenstructure of linear systems provides direct access to fundamental dynamical characteristics of the systems and allows to manipulate typical characteristics like response-time, damping or decoupling of certain dynamics.
In order to transfer such design methods to switched linear systems we aim for a constructive design-procedure that guarantees stability for arbitrary switching and provides sets of eigenvalues and corresponding eigenvectors from which the designer may choose.
Indeed exploiting the advantages of such eigenstructure assignment allows to decouple certain modes from the output such that a monotone output can be guaranteed for arbitrary switching \cite{HonSWR24}.
Such approach requires a parameterized description of assignable eigenvalues and eigenvectors which is not provided by results currently available.

In this contribution we generalize the results from \cite{WulHR21} for switched systems with two subsystems of arbitrary order.
While most concepts and results are easily generalized to more than two subsystems, we limit our analysis to two subsystems for ease of exposition.

The following section gives a formal problem statement in the context of the stabilization of switched linear systems.
We recall the seminal result of Moore \cite{Moo76} which gives necessary and sufficient conditions for the eigenstructure assignment problem for a single linear time-invariant system.
Our approach exploits this result in order to rectify the eigenstructure of two subsystems.
To this effect we introduce the notion of feedback rectifiable pairs, which states the existence of a feedback for every subsystem with the property that the closed-loop subsystems have the same eigenvectors; for details see Definition~\ref{def:fbrect} below.
It turns out that in some cases the eigenspaces of the subsystems may only be rectified if the assigned eigenvalues satisfy certain conditions, see Example~\ref{ex:lambda_mu}.
In Proposition \ref{prop:wieWHR} we formulate necessary and sufficient conditions for the feedback rectifiability of pairs for two subsystems.
Essentially the intersection of two subspaces determined by the subsystems has to span the whole space in order to allow for an arbitrary assignment of the eigenvalues.
This subspace can be described by a polynomial matrix as elaborated in Section \ref{sec:regular_case}.
The rectifiability can be characterized via properties of a polynomial matrix in Theorem~\ref{thm:main_real} and Theorem~\ref{thm:main_cplx}.
Further, we propose a constructive procedure to calculate the polynomial matrix in Corollary \ref{cor:schnitt_mit_echelon}.
With this characterization at hand, the eigenvalues and eigenvectors for the closed-loop subsystems can be chosen and the respective feedback matrices that stabilize the switched system for arbitrary switching are readily obtained, see Algorithm \ref{alg:F}.

Some notation used:  $\adj(C)^\top =(c_{ij})_{i,j=1}^n$ is the \emph{adjugate} of $C\in\C^{n\times n}$, $c_{ij}=(-1)^{i+j}\det C_{ij}$ and $C_{ij}$ is the matrix that is obtained from $C$ by deleting the $i$-th row and the \mbox{$j$-th} column.
The complex conjugate of a complex number~$\lambda$ is denoted by $\bar\lambda$.
For a matrix $A\in\R^{n\times n}$ we denote by $\sigma(A)\subset\C$ its set of eigenvalues. Furthermore, for a set $S\subseteq \mathbb{K}^n$ where $\mathbb{K}$ is either $\mathbb{R}$ or $\mathbb{C}$, we denote by ${\rm span}_{\mathbb{K}}$ the set of all finite linear combinations using coefficients in $\mathbb{K}$.

\section{Preliminaries and problem definition.}
\subsection{Problem definition}  
\label{sec:prob_def}

We consider the switched system that is composed by the constituent linear time-invariant (LTI) systems
\begin{align}\label{eq:sys_const_LTI}
	\dot x=A_q x+B_q u
\end{align}
with $A_q\in\mathbb R^{n\times n}$, $B_q\in\mathbb R^{n\times p}$ and $q\in\mathcal I=\{1,2\}$. 
The column rank of the input matrix $B_q$ is denoted by \mbox{$m_q:=\rk B_q\leq p$} and without loss of generality we assume that
\begin{align*} 
	p=m_1\geq m_2.
\end{align*} 

These subsystems constitute the switched system
\begin{align}  \label{eq:sys_switched_process}
	\dot x=A_{s(t)}x+B_{s(t)}u
\end{align}
where the switching signal $s:\mathbb R^+\rightarrow \mathcal I$ is a piecewise constant function. 
Note that the system and input matrices of \eqref{eq:sys_switched_process} switch simultaneously such that at any given time $t$ one of the subsystems~\eqref{eq:sys_const_LTI} is active.

We consider the asymptotic stability of the origin of the homogeneous
solution of \eqref{eq:sys_switched_process} for arbitrary switching signals of the above characteristics.
It is well known that asymptotic stability of the subsystems \eqref{eq:sys_const_LTI} does not imply asymptotic stability of the switched system for arbitrary switching.
Instead the existence of a common (but not necessarily quadratic) Lyapunov function for the subsystems is necessary and sufficient for the asymptotic stability for arbitrary switching \cite{LibMor99,DecBraPetLen00,ShoWirMasWulKin07}.

This paper aims at the control synthesis problem for the switched system \eqref{eq:sys_switched_process}.
Assuming the current value $s(t)$ of the switching signal is known at all times we consider the feedback law
\begin{align}   \label{eq:control_law}
	u = F_{s(t)}x
\end{align}%
with $F_{s(t)}\in\mathbb R^{p\times n}$.
The resulting closed-loop control system again constitutes a switched linear system and takes the form
\begin{align} \label{eq:sys_closedloop}
	\dot x = (A_{s(t)} + B_{s(t)}F_{s(t)})x\,.
\end{align}

The control problem considered in this contribution is to design feedback matrices $F_q\in\mathbb R^{p\times n}$, $q\in\mathcal I$ for a given switched system \eqref{eq:sys_switched_process} such that the closed-loop system \eqref{eq:sys_closedloop} is asymptotically stable for arbitrary switching and exhibits eigenvalues and eigenvectors to be chosen by the control designer.

\subsection{Preliminary results}
\label{sec:pre_results}

Our approach to design stable switched systems is inspired by the seminal results for the stability of switched systems for arbitrary switching \cite{MorMorKur98,ShoNar98}.
\begin{theorem}\label{thm:triangularisation}
	Given the switched system \eqref{eq:sys_switched_process} with $A_q$ Hurwitz, $q\in\mathcal I$.
	If there exists a $T\in\mathbb C^{n\times n}$, regular, such that $T^{-1}A_q T$ is in upper triangluar form for all $q\in\mathcal I$ then the origin of \eqref{eq:sys_switched_process} is asymptotically stable for arbitrary switching. Moreover there exists a common quadratic Lyapunov function.
\end{theorem}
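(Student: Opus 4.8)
\medskip
\noindent\textbf{Proof plan.}
The plan is to pass, via a common similarity transformation, to the case where all subsystem matrices are simultaneously upper triangular; there to construct a common quadratic Lyapunov function by a diagonal scaling that makes the strictly‑upper‑triangular parts of all subsystems uniformly small; and finally to read off uniform asymptotic stability from that Lyapunov function, using that $\mathcal I$ is finite.

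First I would note that both conclusions are invariant under a common similarity. If $S\in\C^{n\times n}$ is regular and $z\mapsto z^{*}\tilde P z$ is a common quadratic Lyapunov function for the matrices $S^{-1}A_qS$, $q\in\mathcal I$, then $x\mapsto x^{*}(S^{-*}\tilde P S^{-1})x$ is one for the $A_q$; moreover trajectories of $\dot x=A_{s(t)}x$ and of $\dot z=(S^{-1}A_{s(t)}S)z$ correspond under the fixed linear isomorphism $z=S^{-1}x$, so asymptotic stability for arbitrary switching transfers as well. Using the $T$ provided by hypothesis, we may hence assume from now on that each $A_q=:U_q$ is upper triangular over $\C$; its diagonal $\Lambda_q$ consists of the eigenvalues of the original real Hurwitz matrix $A_q$ and therefore has entries with strictly negative real part.

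The heart of the argument is the Lyapunov construction in this triangular situation. For $\varepsilon>0$ let $D_\varepsilon:=\operatorname{diag}(1,\varepsilon,\dots,\varepsilon^{\,n-1})$. Then $M_q(\varepsilon):=D_\varepsilon^{-1}U_qD_\varepsilon=(TD_\varepsilon)^{-1}A_q(TD_\varepsilon)$ is again upper triangular with diagonal $\Lambda_q$, and its $(i,j)$‑entry for $j>i$ equals $\varepsilon^{\,j-i}(U_q)_{ij}$; hence $M_q(\varepsilon)+M_q(\varepsilon)^{*}\to\Lambda_q+\Lambda_q^{*}$ as $\varepsilon\to0^{+}$, and the limit is a diagonal matrix with strictly negative entries, thus negative definite. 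Since the set of negative definite Hermitian matrices is open, there is for each $q$ some $\varepsilon_q>0$ with $M_q(\varepsilon)+M_q(\varepsilon)^{*}\prec0$ for $0<\varepsilon<\varepsilon_q$; as $\mathcal I$ is finite, fix one $\varepsilon\in(0,\min_q\varepsilon_q)$ and put $S:=TD_\varepsilon$. Then $P_{\C}:=(SS^{*})^{-1}$ is Hermitian positive definite and $A_q^{*}P_{\C}+P_{\C}A_q=S^{-*}\bigl(M_q(\varepsilon)+M_q(\varepsilon)^{*}\bigr)S^{-1}\prec0$ for all $q$. Finally, since the $A_q$ are real, $P:=\operatorname{Re}P_{\C}$ is real symmetric, still positive definite (because $x^{\top}Px=x^{*}P_{\C}x>0$ for real $x\neq0$), and satisfies $A_q^{\top}P+PA_q\prec0$ on $\R^{n}$; so $V(x)=x^{\top}Px$ is a common quadratic Lyapunov function.

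To conclude, finiteness of $\mathcal I$ gives a constant $c>0$ with $A_q^{\top}P+PA_q\preceq-cP$ for all $q$. Along any admissible switching signal the state of $\dot x=A_{s(t)}x$ is continuous (only the vector field jumps), and on each interval of constancy $\tfrac{d}{dt}\bigl(x(t)^{\top}Px(t)\bigr)\le-c\,x(t)^{\top}Px(t)$; integrating, $x(t)^{\top}Px(t)\le e^{-ct}x(0)^{\top}Px(0)$ for all $t\ge0$, and the equivalence of $x^{\top}Px$ with $\|x\|^{2}$ yields uniform exponential — hence asymptotic — stability of the origin for arbitrary switching. I expect the only real obstacle to be the diagonal‑scaling step: one must see that conjugating by $D_\varepsilon$ shrinks all nilpotent parts at once while leaving the stable diagonal untouched, so that the certificate $P=I$ that works for the diagonal parts persists under this perturbation; the similarity reduction, the complex‑to‑real passage, and the stability estimate are then routine once $\mathcal I$'s finiteness is invoked for uniform constants.
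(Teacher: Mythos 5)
Your proof is correct: the similarity-invariance reduction, the diagonal scaling $D_\varepsilon=\operatorname{diag}(1,\varepsilon,\dots,\varepsilon^{n-1})$ that shrinks the strictly upper-triangular parts while preserving the Hurwitz diagonal, the resulting common Lyapunov matrix $(SS^*)^{-1}$, the passage to its real part, and the uniform decay estimate all go through as written. The paper itself gives no proof of this theorem --- it is quoted as a known result from the cited literature --- and your argument is precisely the standard construction used there, so there is nothing to fault beyond the cosmetic clash of your $M_q(\varepsilon)$ with the paper's use of $M_q(\lambda)$ for the kernel parametrization in \eqref{eq:K}.
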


Note that if two matrices commute, i.e.\ $A_1A_2=A_2A_1$ holds, then they are simultaneously triangulizable. More precisely, see \cite{DDG51}, simultaneous triangularisation holds true if and only if for every polynomial $p$ in two variables the matrix $p(A_1,A_2)
(A_1A_2-A_2A_1)$ is nilpotent. A~special case for which   Theorem~\ref{thm:triangularisation} can be applied is when $A_1$ and $A_2$ share the same set of $n$ linearly independent eigenvectors, and thus are simultaneously diagonalizable.

For our design of the switched controller \eqref{eq:control_law} we shall exploit the seminal result by Moore, \cite{Moo76},
that characterizes the eigenvalue and eigenvector assignment for a~single closed-loop LTI system $(A,B)$.
Consider $N(\lambda)$ and $M(\lambda)$ satisfying
\begin{align}\label{eq:K}
	\ker \begin{bmatrix}
		\lambda I-A&B    
	\end{bmatrix}=
	\im\begin{bmatrix}
		N(\lambda) \\
		M(\lambda)
	\end{bmatrix}.
\end{align}
Using this notion we have the following, \cite{Moo76}.
\begin{theorem}
	\label{thm:moore}
	Given $(A,B)\in\R^{n\times n}\times\R^{n\times p}$  and the self-conjugate set $\Lambda =\{\lambda_{1},\ldots,\lambda_{n}\}$  with distinct entries. Then there exists $F\in \mathbb{R}^{p\times n}$ such that $(A+BF)v_i=\lambda_i v_i$, $v_i\neq 0$, for all $i\in \{1,\ldots,n\}$ if and only if the following three conditions are satisfied:
	\begin{enumerate}
		\item[\rm (i)] $\{v_1,\ldots, v_n\}\subset \mathbb C^n$ are linearly independent vectors,
		\item[\rm (ii)] $v_l=\bar v_j$ whenever $\lambda_l=\bar\lambda_j$,
		\item[\rm (iii)] $v_i\in \im N(\lambda_i)$,
	\end{enumerate}
	where $\bar v$ and $\bar \lambda$ denote the complex conjugate of $v$ and $\lambda$, respectively.
\end{theorem}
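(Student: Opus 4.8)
The plan is to prove the two implications separately: necessity is a direct computation, and in the sufficiency direction the entire difficulty is to arrange that the feedback produced is \emph{real}.

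\emph{Necessity.} Assume $F\in\R^{p\times n}$ satisfies $(A+BF)v_i=\lambda_i v_i$ with $v_i\neq 0$. Since the $\lambda_i$ are pairwise distinct, eigenvectors to distinct eigenvalues are linearly independent, which is (i); in particular $A+BF$ is diagonalizable and each eigenspace $\ker(A+BF-\lambda_i I)$ is exactly $\Span\{v_i\}$. For (iii), rewrite the eigenvalue relation as $(\lambda_i I-A)v_i+B(-Fv_i)=0$, i.e.\ $\begin{bmatrix}v_i\\ -Fv_i\end{bmatrix}\in\ker\begin{bmatrix}\lambda_i I-A & B\end{bmatrix}$, so by \eqref{eq:K} its first block component satisfies $v_i\in\im N(\lambda_i)$. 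For (ii), realness of $A+BF$ gives, whenever $\lambda_l=\bar\lambda_j$, that $(A+BF)\bar v_j=\bar\lambda_j\bar v_j=\lambda_l\bar v_j$, so $\bar v_j$ spans the one-dimensional $\lambda_l$-eigenspace $\Span\{v_l\}$; hence $v_l$ and $\bar v_j$ are parallel, and normalizing the eigenvectors consistently we may take $v_l=\bar v_j$.

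\emph{Sufficiency.} Assume (i)--(iii). First fix, for each $\lambda_i\in\Lambda$, a kernel representation as in \eqref{eq:K}, chosen so that on the finite set $\Lambda$ one has $\begin{bmatrix}N(\bar\lambda)\\ M(\bar\lambda)\end{bmatrix}=\overline{\begin{bmatrix}N(\lambda)\\ M(\lambda)\end{bmatrix}}$; this is possible because $A,B$ are real, so conjugating a basis of $\ker[\lambda_j I-A\ \ B]$ yields a basis of $\ker[\bar\lambda_j I-A\ \ B]$, and for real $\lambda_i$ one simply picks a real basis. By (iii) choose $z_i\in\C^p$ with $v_i=N(\lambda_i)z_i$, taking $z_l:=\bar z_j$ for a conjugate pair $\lambda_l=\bar\lambda_j$ (consistent with $v_l=\bar v_j$ from (ii)) and $z_i$ real when $\lambda_i$ is real (possible since then $v_i$ is real). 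Set $w_i:=-M(\lambda_i)z_i$; then $\begin{bmatrix}v_i\\ w_i\end{bmatrix}\in\ker[\lambda_i I-A\ \ B]$ yields $Av_i+Bw_i=\lambda_i v_i$. By (i) the vectors $v_1,\dots,v_n$ form a basis of $\C^n$, so there is a unique $F=WV^{-1}\in\C^{p\times n}$ with $V=[v_1\ \cdots\ v_n]$, $W=[w_1\ \cdots\ w_n]$ satisfying $Fv_i=w_i$, and for this $F$ we get $(A+BF)v_i=Av_i+Bw_i=\lambda_i v_i$ for all $i$.

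\emph{The main obstacle -- reality of $F$.} It remains to check $F\in\R^{p\times n}$, and this is the step I expect to require the most care, since it is exactly what forces the conjugate-consistent choices made above. By (ii) and the choice $w_l=\bar w_j$, $w_i$ real for real $\lambda_i$, there is a permutation matrix $P$ (the involution swapping each pair of conjugate indices and fixing the real ones, so $P^{-1}=P$) with $\bar V=VP$ and $\bar W=WP$. Therefore $\bar F=\bar W\,\bar V^{-1}=WP\,(VP)^{-1}=WPP^{-1}V^{-1}=WV^{-1}=F$, hence $F$ is real. Everything else in the sufficiency argument is bookkeeping around the defining relation \eqref{eq:K}.
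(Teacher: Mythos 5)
Your proof is correct and, for the parts the paper actually writes out, follows the same route as the paper's sketch in Section~2.2: the same observation that $(A+BF)v_i=\lambda_i v_i$ forces $\bigl[v_i^\top,\ -(Fv_i)^\top\bigr]^\top\in\ker[\lambda_i I-A\ \ B]$ and hence $v_i\in\im N(\lambda_i)$, and the same construction $Fv_i=-M(\lambda_i)z_i$ for sufficiency, which is exactly \eqref{eq:moore_calc_F}. Where you genuinely add value is the last step: the paper explicitly defers the realness of $F$ to Moore's original article, whereas you supply it, via a conjugate-consistent choice of the kernel bases $N(\bar\lambda)=\overline{N(\lambda)}$, $M(\bar\lambda)=\overline{M(\lambda)}$ on $\Lambda$ and the permutation identity $\bar V=VP$, $\bar W=WP$ giving $\bar F=F$; this is the standard and correct way to close that gap, and your treatment of necessity of (ii) (parallel eigenvectors in a one-dimensional eigenspace, fixed by normalization) is if anything more careful than the paper's ``follows from elementary matrix theory.'' One harmless slip: with your convention $w_i:=-M(\lambda_i)z_i$ the vector lying in $\ker[\lambda_i I-A\ \ B]$ is $\bigl[v_i^\top,\ -w_i^\top\bigr]^\top$, not $\bigl[v_i^\top,\ w_i^\top\bigr]^\top$; the two sign errors cancel and your conclusion $Av_i+Bw_i=\lambda_i v_i$ and the final $F$ are correct, but the intermediate membership claim should be fixed.
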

\begin{remark}\label{GraciasAngelicaZapata}
	In Theorem \ref{thm:moore} the
	vectors $v_i$ are eigenvectors to the eigenvalues $\lambda_i$
	and, as the entries of $\Lambda$
	are distinct, the $v_i$ are automatically linear independent.
	Clearly,  Theorem \ref{thm:moore} still holds 
	under the assumption that the matrix $A+BF$
	is diagonalizable (that is, multiple eigenvalues are allowed but no generalized eigenvectors),
	see the remarks after Proposition 1 in \cite{Moo76}.
	Moreover, there exists also a version of Theorem \ref{thm:moore} without any assumptions
	on the Jordan structure of the matrix $A+BF$, that is,
	arbitrary Jordan chains are allowed, see \cite{klein77}.
\end{remark}
\begin{remark}
	Recall that $(A,B)$ is called controllable if $\rk [\lambda I-A,B]=n$ holds for all $\lambda\in\C$, which implies that the kernel in \eqref{eq:K} is of dimension $p$ for all $\lambda\in\C$. However, controllability is
	not mentioned in Theorem~\ref{thm:moore}. The equivalence of controllability and
	pole assignability implies that the three conditions in Theorem~\ref{thm:moore}
	cannot be satisfied if the uncontollable eigenvalues are not included in the selected set
	$\Lambda$ of closed-loop eigenvalues 
	(i.e., the numbers $\lambda_i$ in  Theorem~\ref{thm:moore}),
	see \cite{Moo76}.
\end{remark}

If $(A,B)$ is controllable then the kernel of $\begin{bmatrix}\lambda I-A & B\end{bmatrix}$ in \eqref{eq:K} can be described by matrices of dimension 
$N(\lambda)\in\mathbb R^{n\times p}$ and $M(\lambda)\in\mathbb R^{p\times p}$ with $\rk N(\lambda)=\rk B=m$ and $\rk M(\lambda)=p$.
For uncontrollable eigenvalues the kernel increases (see Section \ref{sec:uncontrollable_modes}).

Note that the set of assignable eigenvectors is given by the pre-image of the image of $B$ under the linear map $(\lambda I-A)$.
If not stated otherwise $N(\lambda)$ is to be understood as a parameterized representation of this pre-image (see also Example \ref{ex:lambda_mu}).
In any case the choice of $N(\lambda)$ and $M(\lambda)$ in \eqref{eq:K} is unique up to multiplications with invertible matrices from the right.
If $\lambda\in\C\setminus\sigma(A)$, then a possible choice is the following
\begin{align}
	\label{eq:N_via_adj}
	N(\lambda)=\adj(\lambda I -A)B,\quad M(\lambda)=-\det(\lambda I-A)I_p.
\end{align}
However, for $\lambda\in\sigma(A)$ the expression \eqref{eq:N_via_adj} is not valid, e.g.\ consider $A=B=0$, then $\adj(\lambda I -A)B=0$, but $\ker[-A,B]=\ker[0,0]=\R^2$.

\begin{remark}
		\label{rem:cont}
		In \cite{HaiOse2016} it is shown that the map of the eigenvalues to the kernel in \eqref{eq:K} is continuous in the gap metric for controllable $(A,B)$.
		This suggests that the map $N$ in \eqref{eq:N_via_adj} may be continuously extended for $\lambda\in\sigma(A)$ for such cases.
\end{remark}

In what follows, we shortly recall the main ideas of the proof of
Theorem~\ref{thm:moore}.
Assume that for some $F\in\R^{p\times n}$ and for 
$\{v_1, \ldots, v_n\}$ and  $\{\lambda_1,\ldots, \lambda_n\}$
as in Theorem~\ref{thm:moore}  the following holds
\begin{align*}
	(A+BF)v_i=\lambda_iv_i, \quad i\in\{1,\ldots,n\}.
\end{align*}
Then this implies
\begin{align}
	\label{eq:prep_intersect}
	\begin{bmatrix}
		v_i\\ -Fv_i
	\end{bmatrix}&\in\ker\begin{bmatrix}\lambda_iI-A & B\end{bmatrix}=\im\begin{bmatrix}
		N(\lambda_i)\\M(\lambda_i)
	\end{bmatrix}.
\end{align}
Therefore
$v_i\in \im N(\lambda_i)$ holds,  which shows the third
condition in Theorem~\ref{thm:moore}. The first two
conditions follow directly from elementary matrix theory.

Conversely,  if we have
linearly independent vectors $v_i$ with
$v_i=N(\lambda_i)w_i$ for some $w_i \in \mathbb R^p$, we can choose
the feedback matrix $F$ via
\begin{align}\label{eq:moore_calc_F}
	F
	\begin{bmatrix}
		v_1 &\cdots&v_n
	\end{bmatrix}
	=
	-\begin{bmatrix}
		M(\lambda_1)w_1& \cdots& M(\lambda_n)w_n
	\end{bmatrix},
\end{align}
which shows $(A+BF)v_i=\lambda_i v_i$. It remains to be shown
that $F$ can be chosen to be a real matrix, for this
we refer to  \cite{Moo76}.

\section{Feedback rectifiable pairs of matrices}
\label{sec:FB_rectifiable_pairs}
Our goal is to design feedback matrices $F_q$ such that the closed-loop subsystems \eqref{eq:sys_closedloop} are simultaneously similar to diagonal form and have eigenvalues with negative real parts.
In this section we investigate conditions for which two subsystems can be simultaneously diagonalized using state-feedback.

Proposition 1 in \cite{Hai16} provides a necessary and sufficient condition for simultaneous diagonalizability of an arbitrary number of subsystem for a chosen set of closed-loop eigenvalues.
This contribution aims towards providing a set of eigenvalues for which this property can be achieved.
This motivates the following notion.
\begin{defi}
	\label{def:fbrect}
	Let $A_q\in\R^{n\times n}$ and $B_q\in\R^{n\times p}$, $q=1,2$, and  let $D\subset\C\times\C$.
	Then the pairs $(A_1,B_1)$ and $(A_2,B_2)$ are called \emph{feedback rectifiable over $D$} if there exists $F_1,F_2\in\R^{p\times n}$, pairs of complex numbers $(\lambda_{1},\mu_1),\ldots,(\lambda_n,\mu_n)\in D$, and a linearly independent set $\{v_1,\ldots,v_n\}$ in $\C^n$ such that the following holds
	\begin{align}
		\label{def:ferec}
		(A_1+B_1F_1)v_i=\lambda_iv_i,\quad (A_2+B_2F_2)v_i=\mu_iv_i,
	\end{align}
	for all $i=1,\ldots,n$.
\end{defi}

Note that, contrary to the situation in
Theorem~\ref{thm:moore}, multiple eigenvalues
are allowed, see also
Remark~\ref{GraciasAngelicaZapata}.

With this notion together with Theorem \ref{thm:triangularisation} we readily obtain the following result \cite{WulHR21}.
\begin{theorem}
	\label{thm:stabilizable}
	The switched system \eqref{eq:sys_switched_process} is stabilizable for arbitrary switching via state-feedback \eqref{eq:control_law} if the pairs $(A_1,B_1)$ and $(A_2,B_2)$ are feedback rectifiable over a set $D$ that only contains elements with negative real parts.
\end{theorem}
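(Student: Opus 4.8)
The plan is to derive this as a direct corollary of Theorem~\ref{thm:triangularisation}, using feedback rectifiability to construct the triangularizing transformation. Suppose the pairs $(A_1,B_1)$ and $(A_2,B_2)$ are feedback rectifiable over a set $D\subset\C\times\C$ whose elements all have first and second components with negative real parts. By Definition~\ref{def:fbrect} we obtain feedback matrices $F_1,F_2\in\R^{p\times n}$, pairs $(\lambda_i,\mu_i)\in D$, and a linearly independent set $\{v_1,\dots,v_n\}$ in $\C^n$ with $(A_1+B_1F_1)v_i=\lambda_i v_i$ and $(A_2+B_2F_2)v_i=\mu_i v_i$ for all $i$.

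First I would assemble the eigenvectors into the matrix $T:=[v_1\ \cdots\ v_n]\in\C^{n\times n}$, which is invertible precisely because the $v_i$ are linearly independent. The defining relations \eqref{def:ferec} then give $(A_q+B_qF_q)T = T\Lambda_q$ for $q=1,2$, where $\Lambda_1=\operatorname{diag}(\lambda_1,\dots,\lambda_n)$ and $\Lambda_2=\operatorname{diag}(\mu_1,\dots,\mu_n)$. Hence $T^{-1}(A_q+B_qF_q)T=\Lambda_q$ is diagonal, in particular upper triangular, for both $q\in\mathcal I$ \emph{simultaneously} with the same $T$. Moreover the diagonal entries $\lambda_i$ (resp.\ $\mu_i$) are exactly the eigenvalues of $A_1+B_1F_1$ (resp.\ $A_2+B_2F_2$), and since $(\lambda_i,\mu_i)\in D$ all of these have negative real part; thus each closed-loop matrix $A_q+B_qF_q$ is Hurwitz.

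Next I would apply Theorem~\ref{thm:triangularisation} to the \emph{closed-loop} switched system \eqref{eq:sys_closedloop}, i.e.\ to the family $\{A_q+B_qF_q : q\in\mathcal I\}$: each member is Hurwitz and they are simultaneously upper-triangularized by the single regular matrix $T$, so the origin of \eqref{eq:sys_closedloop} is asymptotically stable for arbitrary switching (and admits a common quadratic Lyapunov function). Since \eqref{eq:sys_closedloop} is precisely the system obtained from \eqref{eq:sys_switched_process} under the feedback law \eqref{eq:control_law} with these $F_1,F_2$, this establishes stabilizability for arbitrary switching and completes the proof.

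There is essentially no obstacle here: the argument is a bookkeeping reduction to the two results already available. The only point needing a word of care is that Theorem~\ref{thm:triangularisation} is stated for complex regular $T$, which is exactly what feedback rectifiability supplies (the $v_i$ and the eigenvalues may be complex, occurring in conjugate pairs), so no extra reality argument is required beyond the fact — already guaranteed by Definition~\ref{def:fbrect} and the construction \eqref{eq:moore_calc_F} — that the $F_q$ themselves can be taken real.
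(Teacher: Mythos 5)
Your proof is correct and follows exactly the route the paper intends: the paper states this result as an immediate consequence of Theorem~\ref{thm:triangularisation} (without writing out the details), and your argument --- assembling $T=[v_1\ \cdots\ v_n]$ from the rectified eigenvectors, noting that $T^{-1}(A_q+B_qF_q)T$ is diagonal with Hurwitz spectrum drawn from $D$, and then invoking Theorem~\ref{thm:triangularisation} for the closed-loop family --- is precisely that omitted bookkeeping. No gaps; the remark that $T$ may be complex and the $F_q$ are real by Definition~\ref{def:fbrect} covers the only points of care.
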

In order to achieve rectifiability together with stabilization a sufficient number of parameters have to be available for the control design \cite{WulHR21}, which yields the following 
condition
\begin{align} \label{eq:nec_for_stabilization}
	m_1+m_2\geq n+1.    
\end{align}
Furthermore 
let
\begin{subequations} \label{eq:ker_AqBq}
	\begin{align}
		\label{eq:ker_A1B1}
		\ker \begin{bmatrix}
			\lambda I-A_1& B_1
		\end{bmatrix}&=\im\begin{bmatrix}N_1(\lambda)\\ M_1(\lambda)\end{bmatrix},\\
		\label{eq:ker_A2B2}
		\ker  \begin{bmatrix} \mu I-A_2 & B_2 \end{bmatrix}&=\im\begin{bmatrix}N_2(\mu)\\ M_2(\mu)\end{bmatrix},
	\end{align}
\end{subequations}
for some $N_1(\lambda),N_2(\mu)\in\C^{n\times p},M_1(\lambda),M_2(\mu)\in\C^{p\times p}$.
If the pairs $(A_1,B_1)$ and $(A_2,B_2)$ are feedback rectifiable over $D$, then \eqref{def:ferec} together with \eqref{eq:prep_intersect} implies
\begin{align}
	\label{eq:single_cap}
	v_i\in\im N_1(\lambda_i)\cap\im N_2(\mu_i),\quad \text{for all $i=1,\ldots,n$.}
\end{align}
Hence, we have  
\begin{align}\label{eq:span_of_all_intersections}
	\Span_{\C}\big((N_1\cap N_2)(D)\big)=\C^n,
\end{align}
where
\begin{align*}
	(N_1\cap N_2)(D):=\bigcup_{(\lambda,\mu)\in D}\im N_1(\lambda)\cap \im N_2(\mu).
\end{align*}

Moreover, we have the following characterization of feedback rectifiability which generalizes the result for $n=3$ from \cite[Theorem 4]{WulHR21}. 
\begin{prop}
	\label{prop:wieWHR}
	Consider the pairs $(A_q,B_q)\in\R^{n\times n}\times \R^{n\times p}$, $q=1,2$. Then the following holds:
	\begin{itemize}
		\item[\rm (a)] The pairs are feedback rectifiable over $D\subseteq\R\times\R$ if and only if $\Span_{\R}\big((N_1\cap N_2)(D)\big)=\R^n$ holds. 
		\item[\rm (b)]  If the pairs are feedback rectifiable over $D\subseteq\C\times\C$ then $\Span_{\C}\big((N_1\cap N_2)(D)\big)=\C^n$ holds. 
		\item[\rm (c)] The pairs are feedback rectifiable over $D\subseteq\C\times\C$ if there exists $\{(\lambda_1,\mu_1),\ldots,(\lambda_n,\mu_n)\}\subseteq D$ satisfying the following conditions:
		\begin{itemize}
			\item[\rm (i)] there exist $v_i\in \im N_1(\lambda_i)\cap\im N_2(\mu_i)$ for all $i=1,\ldots,n$ such that $\{v_1,\ldots,v_n\}$ is linearly independent;
			\item[\rm (ii)]  $v_i=\overline{v_j}$ holds whenever $\lambda_i=\overline{\lambda_j}$, and in this case we can choose without restriction $\mu_j=\overline{\mu_i}$.
		\end{itemize}
	\end{itemize}
\end{prop}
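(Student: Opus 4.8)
The plan is to derive all three parts from Moore's theorem (Theorem~\ref{thm:moore}) and its diagonalizable variant (Remark~\ref{GraciasAngelicaZapata}), applied separately to the two pairs; nothing deeper than linear algebra enters. Part~(b) needs no new work beyond the computation already performed before the statement: if $F_1,F_2$, pairs $(\lambda_i,\mu_i)\in D$ and a basis $\{v_1,\dots,v_n\}$ of $\C^n$ realise \eqref{def:ferec}, then \eqref{eq:prep_intersect} forces $v_i\in\im N_1(\lambda_i)\cap\im N_2(\mu_i)$ for every $i$, so the basis $\{v_1,\dots,v_n\}$ lies in $(N_1\cap N_2)(D)$ and $\Span((N_1\cap N_2)(D))=\C^n$ follows. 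For the forward implication of part~(a) I would repeat this argument with $(\lambda_i,\mu_i)\in D\subseteq\R\times\R$: since $\lambda_i,\mu_i$ are real, $N_1(\lambda_i)$ and $N_2(\mu_i)$ may be taken real, so $\im N_1(\lambda_i)\cap\im N_2(\mu_i)$ is spanned by real vectors; consequently the real and imaginary parts of each $v_i$ again lie in this intersection, and since $\{v_1,\dots,v_n\}$ spans $\C^n$ these real and imaginary parts span $\R^n$, which gives $\Span((N_1\cap N_2)(D))=\R^n$.

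For the reverse implication of part~(a) I would start from $\Span((N_1\cap N_2)(D))=\R^n$, extract a basis $\{v_1,\dots,v_n\}$ of $\R^n$ from the spanning set $(N_1\cap N_2)(D)$, and record that this yields $(\lambda_i,\mu_i)\in D$ with $v_i\in\im N_1(\lambda_i)\cap\im N_2(\mu_i)$. Writing $v_i=N_1(\lambda_i)w_i=N_2(\mu_i)z_i$ with $w_i,z_i$ real (possible because $\lambda_i,\mu_i$ are real), I would define $F_1$ and $F_2$ by the Moore construction \eqref{eq:moore_calc_F} applied to each pair separately. Both feedback matrices are then real because all data involved are real, they satisfy \eqref{def:ferec} by construction, and possible repetitions among the $\lambda_i$ or among the $\mu_i$ are harmless: the $v_i$ form a basis, so the closed-loop matrices are diagonalizable and Remark~\ref{GraciasAngelicaZapata} applies.

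For part~(c) I would invoke the diagonalizable form of Moore's theorem twice, once for $(A_1,B_1)$ with the data $\{(\lambda_i,v_i)\}$ and once for $(A_2,B_2)$ with the data $\{(\mu_i,v_i)\}$. In either case hypothesis~(i) provides the linear independence and the image membership required by Moore's conditions (i) and (iii); the self-conjugacy condition (ii) of Moore is supplied for the first system by the first half of (ii) here, and for the second system by the second half: once the conjugation pairing $v_i=\overline{v_j}$ dictated by $\lambda_i=\overline{\lambda_j}$ is fixed, choosing $\mu_j=\overline{\mu_i}$ makes the family $\{(\mu_i,v_i)\}$ self-conjugate with the same pairing. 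Moore's theorem then produces real $F_1,F_2$ with $(A_1+B_1F_1)v_i=\lambda_iv_i$ and $(A_2+B_2F_2)v_i=\mu_iv_i$; since the eigenvectors $v_i$ are common and $(\lambda_i,\mu_i)\in D$, this is exactly feedback rectifiability over $D$.

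I expect the only delicate point to be the reality of the feedback matrices, i.e.\ the conjugation bookkeeping in part~(c): one must ensure that the conjugation symmetry of the eigenvector family is simultaneously compatible with the closed-loop spectra of both subsystems, and this is precisely what the clause ``$\mu_j=\overline{\mu_i}$'' in condition~(ii) buys. A minor additional subtlety is that Definition~\ref{def:fbrect} permits repeated closed-loop eigenvalues, so throughout one should use the diagonalizable version of Moore's theorem from Remark~\ref{GraciasAngelicaZapata} rather than the distinct-eigenvalue statement of Theorem~\ref{thm:moore}.
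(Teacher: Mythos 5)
Your proof is correct and follows essentially the same route as the paper: necessity in (a) and all of (b) come from the kernel-membership observation \eqref{eq:prep_intersect}--\eqref{eq:single_cap}, sufficiency in (a) from extracting a basis out of the spanning set and applying Moore's construction \eqref{eq:moore_calc_F} (the paper writes this as \eqref{eq:construction_F_q} and verifies the eigenvalue equations by direct computation), and (c) from Theorem~\ref{thm:moore} together with Remark~\ref{GraciasAngelicaZapata} applied to each pair. Your explicit real/imaginary-part argument for the forward direction of (a) is just a more careful rendering of what the paper dispatches in one sentence.
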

\begin{proof}
	(a) Assume that $\Span_{\R}\big((N_1\cap N_2)(D)\big)=\R^n$. Let $\{b_1,\ldots,b_n\}$ be a basis of $\R^n$. Then the assumption implies that there exist $\alpha_{i,j}\in\R$ with $i=1,\ldots,n$, $j=1,\ldots,k_i$ as well as elements 
	$v_{i,j}\in(N_1\cap N_2)(D)$ such that 
	\[
	b_i=\sum_{j=1}^{k_i}\alpha_{i,j}v_{i,j}
	\]
	holds for all $i=1,\ldots,n$. Hence $\R^n=\Span_{\R}\{v_{i,j}~|~ i=1,\ldots,n,~j=1,\ldots,k_i\}$. Therefore, this spanning set can be reduced to a basis of $\R^n$, say $\{v_{1},\ldots, v_{n}\}\subseteq (N_1\cap N_2)(D)$. Further, by definition of $(N_1\cap N_2)(D)$ there exist $(\lambda_i,\mu_i)\in D$ such that
	\begin{align}
		\label{vk_int}
		v_i\in \im N_1(\lambda_i)\cap \im N_2(\mu_i)\quad \text{for all $i=1,\ldots,n$.}
	\end{align}
	
	For the construction of feedback matrices $F_q\in\R^{p\times n}$, $q=1,2$, we can apply the construction proposed in \cite{Moo76}. 
	By the linear independence of its columns, $V:=\begin{bmatrix}
		v_1&\cdots&v_n
	\end{bmatrix}\in\R^{n\times n}$ is invertible. Further, invoking \eqref{vk_int} there exist $w_{qi}\in\R^{p}$ such that 
	\begin{align}
		\label{eq: parametervector}
		v_i=N_1(\lambda_i)w_{1i}=N_2(\mu_i)w_{2i},\quad \text{for all $i=1,\ldots,n$}.
	\end{align}
	
	Since $V^{-1}v_i$ equals the $i$-th canonical unit vector the feedback matrices
	\begin{subequations}
		\label{eq:construction_F_q}
		\begin{align}
			F_1&:=-\begin{bmatrix}   M_1(\lambda_1) w_{11} & \ldots &M_1(\lambda_n)w_{1n}
			\end{bmatrix}V^{-1}\\
			F_2&:=-\begin{bmatrix}   M_2(\mu_1) w_{21} & \ldots &M_2(\mu_n)w_{2n}
			\end{bmatrix}V^{-1}
		\end{align}
	\end{subequations}
	satisfy \eqref{def:ferec} as
	\begin{equation*}
		\begin{split}
			(A_1+B_1F_1&)v_i =
			(A_1-\lambda_iI+\lambda_iI+B_1F_1)v_i\\
			&= -(\lambda_iI -A_1)N_1(\lambda_i)w_{1i}+
			\lambda_i v_i+M_1(\lambda_i)w_{1i}\\
			&= \lambda_i v_i.
		\end{split}
	\end{equation*}
	A similar calculation shows 
	$(A_2+B_2F_2)v_i = \mu_i v_i$ which implies
	\eqref{def:ferec}.
	The remaining statement of (a) and the statement in (b) follow from the reasoning \eqref{eq:single_cap} preceding this proposition, which is true for both real and complex numbers.
	
	Assertion (c) follows 
	from Theorem \ref{thm:moore} and 
	Remark~\ref{GraciasAngelicaZapata}
	applied to each system $(A_1,B_1)$ and $(A_2,B_2)$.
\end{proof}

\begin{remark}
	Note that Theorem~\ref{thm:moore} cannot be directly applied to prove the converse of Proposition~\ref{prop:wieWHR}~(b). If we construct a complex-valued basis $\{v_1,\ldots,v_n\}\subseteq\C^n$ as in the proof of Proposition~\ref{prop:wieWHR}~(a) then this might lead to a set which does not contain pairs of complex conjugate vectors.
\end{remark}

Proposition~\ref{prop:wieWHR} is closely related to Proposition 1 in \cite{Hai16}, where a condition equivalent to \eqref{eq:span_of_all_intersections} is evaluated for a given set $\{(\lambda_1,\mu_1), \ldots, (\lambda_n,\mu_n)\}$. The set $D$ in \eqref{eq:span_of_all_intersections} may contain more pairs of assignable eigenvalues.

For our analysis we use a parameterized description of $N_1(\lambda)$ and $N_2(\mu)$ to obtain a parameterized description of the intersection $\im N_1(\lambda)\cap \im N_2(\mu)$ as illustrated and outlined by the following example.

\begin{ex}\label{ex:continuous}
	Consider the switched system \eqref{eq:sys_switched_process} with $(A_1, B_1)$ and $(A_2, B_2)$ given by 
	{\small 
		\begin{align*}
			A_1&=\begin{bmatrix} 0 & 0 & -1 & 0\\ 0 & 0 & 0 & 0\\ 0 & 0 & 0 & 0\\ 0 &1 & -2 & 0\end{bmatrix},   
			&&B_1=\begin{bmatrix}1 & 0 & 0\\ 1 & -1 & 1\\ 0 & 0 & -1\\ 0 & 0 & 0  \end{bmatrix}, \\
			A_2&=\begin{bmatrix} 0 & 0 & 1 & 0\\ 0 & 0 & 0 & 0\\ 0 & 0 & 0 & 0\\ 0 & 2 & 1 & 0\end{bmatrix},  
			&&B_2=\begin{bmatrix} 0 & 0& 0\\ 1 & 0& 0\\ 0 & 1& 1\\ 0 & 1& 1\end{bmatrix}.
		\end{align*}
	}%
	Note that $m_2=\rk B_2=2$ and the eigenvalues of the system matrices are given by $\sigma(A_1)=\sigma(A_2)=\{0\}$.
	Both subsystems are controllable.
	
	We obtain a representation of the kernels in \eqref{eq:ker_AqBq} using the adjugate representation in \eqref{eq:N_via_adj}:
	{\small 
		\begin{align*}
			N_1(\lambda) \!=\!\! 
			\begin{bmatrix} 
				\lambda ^3 & 0 & \lambda ^2\\[.5ex]
				\lambda ^3 & -\lambda ^3 & \lambda ^3\\[.5ex] 
				0 & 0 & -\lambda ^3\\[.5ex]
				\lambda ^2 & -\lambda ^2 & 3\lambda ^2 
			\end{bmatrix}\!\!,\:
			N_2(\mu)\!=\!\!\begin{bmatrix}
				0 & \mu ^2 & \mu ^2\\[.5ex]
				\mu ^3 & 0 & 0\\[.5ex]
				0 & \mu ^3 & \mu ^3\\[.5ex]
				2\,\mu ^2 & \mu ^3\!+\!\mu ^2 & \mu ^3\!+\!\mu ^2
			\end{bmatrix}\!.
		\end{align*}
	}%
	Note, that this representation is only valid for $\lambda,\mu\neq 0$.
	
	For each choice of $\lambda,\mu$ we obtain an intersection space.
	We are looking for the intersection space $\im N_1(\lambda)\cap \im N_2(\mu)$ parameterized in $\lambda,\mu$.
	This example shows that even though it is easy to find a representation for $N_1(\lambda)$ for $\lambda\notin \sigma(A_1)$ and $N_2(\mu)$  for $\mu\notin\sigma(A_2)$ it is not easy to calculate the intersections from \eqref{eq:single_cap} in order to get all assignable eigenvectors outside $\lambda\in\sigma(A_1)$ and $\mu\in\sigma(A_2)$, and to analyse if $n$ linearly independent eigenvectors can be chosen. This is necessary to calculate the feedback matrices $F_1$ and $F_2$ with \eqref{eq:construction_F_q}. Therefore we aim to give a constructive procedure with the concluding Algorithm \ref{alg:F}.
\end{ex}

From Proposition~\ref{prop:wieWHR}~(b) we have that the pairs $(A_q,B_q)$ are not feedback rectifiable over $D$ if the intersection $\im N_1(\lambda)\cap\im N_2(\mu)$ is empty for all $\lambda,\mu\in D$.
Therefore we obtain with \eqref{eq:N_via_adj} the following result.
\begin{cor}\label{cor:intersect_adj_necess}
	Let $A_1,A_2\in\R^{n\times n}$ and $B_1,B_2\in\R^{n\times p}$. Assume that there exists an $\alpha\in\C^n\setminus\{0\}$ such that $$\alpha^\top\adj(\lambda I -A_1)B_1=0$$ holds for all $\lambda\in\C$, or $$\alpha^\top\adj(\mu I -A_2)B_2=0$$ holds for all $\mu\in\C$. 
	
	Then 
	$(A_1,B_1)$ and $(A_2,B_2)$ are not feedback rectifiable over $(\C\setminus\sigma(A_1))\times (\C\setminus\sigma(A_2))$.
\end{cor}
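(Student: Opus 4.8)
The plan is to derive Corollary~\ref{cor:intersect_adj_necess} directly from Proposition~\ref{prop:wieWHR}~(b) together with the explicit kernel description in~\eqref{eq:N_via_adj}. The key observation is that for $\lambda\in\C\setminus\sigma(A_1)$ we may take $N_1(\lambda)=\adj(\lambda I-A_1)B_1$ (and likewise $N_2(\mu)=\adj(\mu I-A_2)B_2$ for $\mu\in\C\setminus\sigma(A_2)$), so the hypothesis says precisely that a fixed nonzero row vector $\alpha^\top$ annihilates $\im N_1(\lambda)$ for every such $\lambda$, or annihilates $\im N_2(\mu)$ for every such $\mu$.

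First I would assume, for contradiction, that $(A_1,B_1)$ and $(A_2,B_2)$ are feedback rectifiable over $D:=(\C\setminus\sigma(A_1))\times(\C\setminus\sigma(A_2))$. By Proposition~\ref{prop:wieWHR}~(b) this forces $\Span\bigl((N_1\cap N_2)(D)\bigr)=\C^n$. Next I would show this is incompatible with the hypothesis: take any $v\in(N_1\cap N_2)(D)$, so $v\in\im N_1(\lambda)\cap\im N_2(\mu)$ for some $(\lambda,\mu)\in D$. In the first case of the hypothesis $\alpha^\top\adj(\lambda I-A_1)B_1=0$, and since $v\in\im\bigl(\adj(\lambda I-A_1)B_1\bigr)$ we get $\alpha^\top v=0$; in the second case the same conclusion follows from $v\in\im\bigl(\adj(\mu I-A_2)B_2\bigr)$ and $\alpha^\top\adj(\mu I-A_2)B_2=0$. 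Either way every element of $(N_1\cap N_2)(D)$ lies in the hyperplane $\{v:\alpha^\top v=0\}$, hence so does its span, contradicting $\Span\bigl((N_1\cap N_2)(D)\bigr)=\C^n$ because $\alpha\neq 0$.

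The only point requiring a word of care is that~\eqref{eq:N_via_adj} is a legitimate choice of $N_q,M_q$ precisely when the spectral parameter avoids the spectrum of the corresponding matrix — which is exactly why the statement restricts $D$ to $(\C\setminus\sigma(A_1))\times(\C\setminus\sigma(A_2))$ — and that the kernel in~\eqref{eq:ker_AqBq} is unique only up to right multiplication by an invertible matrix, so $\im N_q$ is well defined independently of the representative. I expect no real obstacle here: the argument is a short contrapositive application of part (b), and the ``hard part'' is merely bookkeeping the two symmetric cases of the disjunction in the hypothesis, each of which places $(N_1\cap N_2)(D)$ inside the kernel of $\alpha^\top$.
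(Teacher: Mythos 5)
Your argument is correct and is essentially the paper's own reasoning: the paper derives the corollary in one line from Proposition~\ref{prop:wieWHR}~(b) together with the choice \eqref{eq:N_via_adj}, exactly as you do, and your observation that the hypothesis traps $\Span\bigl((N_1\cap N_2)(D)\bigr)$ inside the hyperplane $\{v:\alpha^\top v=0\}$ is the (correctly stated) version of the paper's informal remark preceding the corollary.
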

The conditions in Corollary~\ref{cor:intersect_adj_necess} imply that at least one of the considered systems is not controllable, since \eqref{eq:prep_intersect} yields that $\alpha$ is also orthogonal to all potentially assigned eigenvectors.

In the following low-dimensional example we consider feedback rectifiability
with respect to different underlying sets $D\subset
\mathbb C \times \mathbb C$.

\begin{ex}
	\label{ex:lambda_mu}
	Consider the controllable pairs $(A_1,B_1)$ and $(A_2,B_2)$ with
	\begin{align*}
		A_1=\begin{bmatrix}-1&0\\0&1\end{bmatrix}=-A_2,\;\;\text{and}\;\;
		B_1=\begin{bmatrix}
			1\\1
		\end{bmatrix}\!,\, B_2=\begin{bmatrix}
			1\\-1
		\end{bmatrix}.
	\end{align*}
	First, we consider the case $\lambda_1=\mu_1=1$ and $\lambda_2=\mu_2=-1$, i.e.\ we aim to assign closed-loop eigenvalues within the spectrum of $A_q$.
	Then the kernel \eqref{eq:K} is given by
	\begin{align*}
		\ker\begin{bmatrix}\lambda_1 I -A_1 &B_1\end{bmatrix}&=\ker\begin{bmatrix}
			2&0&1\\0&0&1
		\end{bmatrix},\\
		\ker\begin{bmatrix}\lambda_2 I -A_1 & B_1\end{bmatrix}&=\ker\begin{bmatrix}
			0&0&1\\0&-2&1
		\end{bmatrix},\\
		\ker\begin{bmatrix}\mu_1 I -A_2 & B_2\end{bmatrix}&=\ker\begin{bmatrix}
			0&0&1\\0&2&-1
		\end{bmatrix},\\ 
		\ker\begin{bmatrix}\mu_2 I -A_2 & B_2\end{bmatrix}&=\ker\begin{bmatrix}
			-2&0&1\\0&0&-1
		\end{bmatrix}\,.
	\end{align*}
	We can write the upper parts of the kernels as
	\begin{align}\label{eq:ex_lambda=mu=pm1}
		N_1(\lambda_1)=\begin{bmatrix} 0\\1 \end{bmatrix}=N_2(\mu_2),\;\;\;
		N_1(\lambda_2)=\begin{bmatrix} 1\\0 \end{bmatrix}=
		N_2(\mu_1).
	\end{align}
	Hence, we have $\im N_1(\lambda_1)\cap\im N_2(\mu_1)=\{0\}$ and $\im N_1(\lambda_2)\cap\im N_2(\mu_2)=\{0\}$ and thus, the 
	pairs are not feedback rectifiable over $D=\{(1,1),(-1,-1) \}$.
	However, swapping the pairing of the eigenvalues we find that they are feedback rectifiable over $D=\{(1,-1),(-1,1) \}$.
	
	For $\lambda,\mu\in\C\setminus\{-1,1\}$ we obtain the upper parts of the kernels according to \eqref{eq:N_via_adj} 
	\begin{subequations}
		\label{eq:ex_lambda=mu}
		\begin{align}
			N_1(\lambda)&=(\lambda I-A_1)^{-1}B_1=\begin{bmatrix}(\lambda +1)^{-1}\\ (\lambda -1)^{-1}\end{bmatrix},\\
			N_2(\mu)&=(\mu I-A_2)^{-1}B_2=\begin{bmatrix}(\mu -1)^{-1}\\ -(\mu +1)^{-1}\end{bmatrix}.
		\end{align}
	\end{subequations}
	
	For a nontrivial intersection $\im N_1(\lambda)\cap \im N_2(\mu)$ there must exist a non-zero $\beta\in\C$ satisfying \[
	\begin{bmatrix}(\lambda +1)^{-1}\\ (\lambda -1)^{-1}\end{bmatrix}=\beta \begin{bmatrix}(\mu -1)^{-1}\\ (-\mu -1)^{-1}\end{bmatrix},
	\]
	or equivalently
	\[
	\frac{\lambda+1}{\mu-1}=\frac{1-\lambda}{\mu+1}\quad \Leftrightarrow\quad  \lambda\mu=-1.
	\]
	Hence we have co-linearity of $N_1(\lambda)$ and $N_2(\mu)$ for \mbox{$\mu=-\lambda^{-1}$.}
	It can be verified using \eqref{eq:ex_lambda=mu=pm1} and \eqref{eq:ex_lambda=mu} that 
	the intersection for all $\lambda\in\C\setminus\{-1,1,0\}$ is given by
	\[
	\im N_1(\lambda)\cap\im N_2(-\lambda^{-1})
	=\left\{\begin{bmatrix}
		\lambda-1\\ \lambda+1
	\end{bmatrix}\right\},
	\]
	which yields linearly independent vectors. 
	Hence, whenever 
	the set $D$ contains two distinct (self-conjugate) pairs $(\lambda_1,\mu_1)$, $(\lambda_2,\mu_2)$
	with 
	\mbox{$\lambda_1\mu_1=\lambda_2\mu_2=-1$} then the pairs 
	$(A_1, B_1)$
	and $(A_2, B_2)$ 
	are feedback rectifiable over $D$,
	e.g.\
	$D=
	\{ (e^{i\phi}, -e^{-i\phi}), (e^{-i\phi}, -e^{i\phi})\}$ for $\phi\in [0,2\pi)$.
\end{ex}
Example~\ref{ex:lambda_mu} shows that rectified eigenstructures may only be achieved with some additional condition for the eigenvalues of the closed-loop systems, e.g.\  $\lambda_i\mu_i=-1$. 
But $\lambda_i\mu_i=-1$ implies $\mu_i=-\lambda_i^{-1}$. Hence
\begin{align*}
	\mbox{\rm Re}\, \mu_i =
	\mbox{\rm Re}\,(-\lambda_i^{-1}) =
	- |\lambda_i|^{-2} \mbox{\rm Re}\, \lambda_i
\end{align*}
and therefore it is not possible to choose both closed-loop eigenvalues, $\lambda_i$ and $\mu_i$, to have negative real-parts.
Hence the stabilization of the switched system with rectified eigenstructure may not be possible.

\section{Characterization of feedback rectifiability}
\label{sec:regular_case}

In this section we apply Proposition~\ref{prop:wieWHR} to develop conditions for which two pairs of matrices $(A_1,B_1)$ and $(A_2,B_2)$ are feedback rectifiable. To this end we derive characterizations such that $(N_1\cap N_2)(D)$ contains a basis of $\R^n$ or $\C^n$. We compute the intersection of $\im N_1(\lambda)$ and $\im N_2(\mu)$ along the lines from \cite{PizOH99,Yan97}. The following lemma is a slight generalization (see also Remark~\ref{rem:lem41}).

\begin{prop}
	\label{prop:intersection}
	Let $\mathbb{F}$ be an arbitrary field and consider the subspaces $\mathcal{K}=\im K$ and $\mathcal{L}=\im L$ for some $K\in\mathbb{F}^{n\times r_1}$ and $L\in\mathbb{F}^{n\times r_2}$. Then there exists an invertible $R\in\mathbb{F}^{n\times n}$ satisfying 
	\begin{align}
		\label{eq:R_L_decomp}
		RK=\begin{bmatrix} K_1\\ 0
		\end{bmatrix},\quad \rk K_1=n_1,\quad RL=\begin{bmatrix}
			L_1\\ L_2
		\end{bmatrix}
	\end{align}
	for some $K_1\in\mathbb{F}^{n_1\times r_1}$, $L_1\in\mathbb{F}^{n_1\times r_2}$ and $L_2\in\mathbb{F}^{(n-n_1)\times r_2}$ and the intersection of the subspaces is given by 
	\begin{align}
		\label{eq:lem_intersection}
		\mathcal{K}\cap\mathcal{L}=\im K\cap\im L=R^{-1}\begin{bmatrix}
			L_1\\ 0
		\end{bmatrix}(\ker L_2).
	\end{align}
	
	Consider $r_3:=\rk L_2$. If $r_3=0$ then the following holds
	\begin{align}
		\label{eq:r3_is_zero}
		\mathcal{K}\cap\mathcal{L}=\im R^{-1}\begin{bmatrix}
			L_1\\0 
		\end{bmatrix}.\quad  
	\end{align}
	Moreover, if $r_3=r_2>0$ then  $\mathcal{K}\cap\mathcal{L}=\{0\}$ holds.
	
	If $0<r_3<r_2$ holds then there exist invertible $\hat R\in\F^{n\times n}$ and $\hat V\in \F^{r_2\times r_2}$ so that we can further decompose \eqref{eq:R_L_decomp} as follows 
	\begin{align*}
		\hat R K&=\begin{bmatrix}
			K_1\\0\\0
		\end{bmatrix}\!\!,\;\; 
		\hat RL\hat V=\begin{bmatrix}
			L_{11} & L_{12}\\ L_{21}& L_{22}\\ 0&0
		\end{bmatrix}\!\!,\;\;
		L_1\hat V=\begin{bmatrix}
			L_{11} & L_{12}
		\end{bmatrix}\!\!,
	\end{align*}
	such that $L_{21}\in\F^{r_3\times r_3}$ is invertible,  $L_{12}\in\F^{n_1\times(r_2-r_3)}$ and $L_{22}\in\F^{r_3\times(r_2-r_3)}$. Furthermore, the following holds 
	\begin{align}
		\label{eq:intersection_quasi_rref}
		\hspace{-.5em}
		\mathcal{K}\cap\mathcal{L}=\im K\cap\im L=\im \hat R^{-1}\!\begin{bmatrix}
			L_{12}\!-\!L_{11}L_{21}^{-1}L_{22}\\ 0\\ 0
		\end{bmatrix}\!.
	\end{align}
\end{prop}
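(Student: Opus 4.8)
The plan is to prove the three regimes ($r_3=0$, $r_3=r_2$, $0<r_3<r_2$) by exploiting the block structure in \eqref{eq:R_L_decomp}, reducing everything to elementary linear algebra after the coordinate change by $R$. First I would establish \eqref{eq:lem_intersection}: an element of $\mathcal K\cap\mathcal L$ is $Kx = Ly$ for some $x,y$, and applying $R$ gives $\begin{bmatrix}K_1x\\0\end{bmatrix}=\begin{bmatrix}L_1y\\L_2y\end{bmatrix}$; the bottom block forces $y\in\ker L_2$, and then the element equals $R^{-1}\begin{bmatrix}L_1y\\0\end{bmatrix}$. Conversely, for $y\in\ker L_2$, $R^{-1}\begin{bmatrix}L_1y\\0\end{bmatrix}=R^{-1}\begin{bmatrix}L_1y\\L_2y\end{bmatrix}=Ly\in\mathcal L$, and since $\rk K_1=n_1$ and $K_1$ has $n_1$ rows, $K_1$ is surjective onto $\F^{n_1}$, so $L_1y=K_1x$ for some $x$, whence $R^{-1}\begin{bmatrix}L_1y\\0\end{bmatrix}=R^{-1}\begin{bmatrix}K_1x\\0\end{bmatrix}=Kx\in\mathcal K$. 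This gives \eqref{eq:lem_intersection}, and the surjectivity of $K_1$ is the one point worth stating carefully.

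Next I would read off the two degenerate cases from \eqref{eq:lem_intersection}. If $r_3=\rk L_2=0$ then $L_2=0$, so $\ker L_2=\F^{r_2}$ and the image of $\begin{bmatrix}L_1\\0\end{bmatrix}$ restricted to $\ker L_2$ is just $\im\begin{bmatrix}L_1\\0\end{bmatrix}$, giving \eqref{eq:r3_is_zero}. If $r_3=r_2>0$ then $L_2$ has trivial kernel, so $\ker L_2=\{0\}$ and $\mathcal K\cap\mathcal L=\{0\}$. Both are immediate.

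The substantive case is $0<r_3<r_2$. Here I would first produce the refined decomposition: since $\rk L_2=r_3<n-n_1$ is possible, an invertible row operation $\tilde R$ on the bottom $n-n_1$ rows brings $L_2$ into the form $\begin{bmatrix}L_{21}&L_{22}\\0&0\end{bmatrix}$ after a column permutation $\hat V$ of $L$ that moves $r_3$ independent columns of $L_2$ to the front, with $L_{21}\in\F^{r_3\times r_3}$ invertible; setting $\hat R=\mathrm{diag}(I_{n_1},\tilde R)\,R$ and carrying $\hat V$ through gives the stated block forms for $\hat R K$, $\hat R L\hat V$, and $L_1\hat V=\begin{bmatrix}L_{11}&L_{12}\end{bmatrix}$. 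Then $\ker L_2$ (in the new column coordinates) consists of $\begin{bmatrix}z_1\\z_2\end{bmatrix}$ with $L_{21}z_1+L_{22}z_2=0$, i.e.\ $z_1=-L_{21}^{-1}L_{22}z_2$ with $z_2\in\F^{r_2-r_3}$ free. Substituting into $L_1\hat V\begin{bmatrix}z_1\\z_2\end{bmatrix}=L_{11}z_1+L_{12}z_2=(L_{12}-L_{11}L_{21}^{-1}L_{22})z_2$ and using \eqref{eq:lem_intersection} with $\hat R$, $\hat V$ in place of $R$ (valid since $\hat V$ is a change of basis on the parameter space and does not change $\im L$), one obtains $\mathcal K\cap\mathcal L=\hat R^{-1}\begin{bmatrix}L_{12}-L_{11}L_{21}^{-1}L_{22}\\0\\0\end{bmatrix}\F^{r_2-r_3}=\im\hat R^{-1}\begin{bmatrix}L_{12}-L_{11}L_{21}^{-1}L_{22}\\0\\0\end{bmatrix}$, which is \eqref{eq:intersection_quasi_rref}.

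The main obstacle, such as it is, is bookkeeping: one must check that the column permutation/change of basis $\hat V$ on $L$ is harmless because \eqref{eq:lem_intersection} holds for \emph{any} representation $\mathcal L=\im L$, and that composing $R$ with the block-lower row operation $\mathrm{diag}(I_{n_1},\tilde R)$ preserves the shape $\hat R K=\begin{bmatrix}K_1\\0\\0\end{bmatrix}$ (it does, since $\tilde R$ acts only on the already-zero bottom block, merely refining the zero rows into $r_3$ rows followed by $n-n_1-r_3$ rows). Everything else is substitution of the parametrised solution of $L_2y=0$ into $L_1y$, which is routine.
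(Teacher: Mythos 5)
Your proposal is correct and follows essentially the same route as the paper: establish \eqref{eq:lem_intersection} via the block form after applying $R$, using surjectivity of the full-row-rank block $K_1$; read off the two degenerate cases; and in the case $0<r_3<r_2$ refine by row operations on the bottom block plus an invertible column transformation $\hat V$, then eliminate $\ker L_2$ to obtain the Schur-complement-type expression $L_{12}-L_{11}L_{21}^{-1}L_{22}$. The only cosmetic difference is that you parametrise $\ker L_2$ directly where the paper normalises the block $L_{21}$ to $I_{r_3}$ inside the image before intersecting, which is the same computation.
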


\begin{proof}
	Applying Gaussian elimination there exist an invertible matrix $R$
	and matrices $K_1, L_1$, and $L_2$ such that \eqref{eq:R_L_decomp}
	holds. As $K_1$ has full row-rank it is surjective. Therefore the following holds
	\[
	\im(RK)\cap\im(RL)=\im \begin{bmatrix}
		K_1\\0
	\end{bmatrix}\cap\im \begin{bmatrix}
		L_1\\ L_2
	\end{bmatrix}=\begin{bmatrix}
		L_1\ker L_2\\0
	\end{bmatrix}.
	\]
	This together with the invertibility of $R$ yields
	\begin{align}\notag
		\im K\cap\im L&=R^{-1}\left(\im(RK)\cap \im(RL)\right)\\
		&= R^{-1}\begin{bmatrix}
			L_1\ker L_2\\0
			\label{eq:R_raus}
		\end{bmatrix},
	\end{align}
	which shows \eqref{eq:lem_intersection}. If $r_3=\rk L_2=0$ holds, then $\ker L_2=\R^{r_2}$ and therefore \eqref{eq:lem_intersection} implies \eqref{eq:r3_is_zero}. 
	
	If $r_3>0$ then, applying Gaussian elimination to the rows of $L_2$, we obtain for some invertible $W\in\F^{(n-n_1)\times(n-n_1)}$ and some invertible $\hat V\in\F^{r_2\times r_2}$
	\begin{align*}
		\begin{bmatrix}
			I_{n_1}&0\\0&W
		\end{bmatrix}RK&=\begin{bmatrix}
			K_1\\0\\0
		\end{bmatrix},\\
		\begin{bmatrix}
			I_{n_1}&0\\0&W
		\end{bmatrix}RL\hat V&=\begin{bmatrix}
			L_{11}& L_{12}\\ L_{21} & L_{22}\\0&0
		\end{bmatrix},
	\end{align*}
	with $\rk L_{21}=\rk L_2=r_3$.
	Here $L_{21}\in\F^{r_3\times r_3}$ is chosen to be square and therefore invertible. 
	Defining \mbox{$\hat R:=\begin{bmatrix}
			I_{n_1}&0\\0&W
		\end{bmatrix}R$}, we have
	\begin{align*}
		\im(\hat R K)\cap\im (\hat RL)&=
		\im(\hat R K)\cap\im (\hat RL \hat V)\\
		&=\im\begin{bmatrix}
			K_1\\0\\0
		\end{bmatrix}\cap\im\begin{bmatrix}
			L_{11}L_{21}^{-1} & L_{12}\\ 
			I_{r_3} & L_{22}\\
			0&0
		\end{bmatrix}\\
		&=\im\begin{bmatrix}
			L_{12}-L_{11}L_{21}^{-1}L_{22}\\ 0\\0
		\end{bmatrix}.
	\end{align*}
	This together with a repetition of the steps in \eqref{eq:R_raus} proves \eqref{eq:intersection_quasi_rref}.
\end{proof}
\begin{remark}\label{rem:lem41}
	In \cite{PizOH99,Yan97} the invertible transformation $R$ in Proposition~\ref{prop:intersection} is chosen in such a way that $[RK,RL]$ is in \emph{reduced row echelon form}. This is a particular staircase form resulting from Gaussian elimination where the leading entries in each row are equal to one. The columns corresponding to such leading entries are unit vectors, i.e.\ all of their other entries are equal to zero. In particular, we have $L_{11}=0$ and $L_{21}=I_{r_3}$ in \eqref{eq:intersection_quasi_rref}. Hence, if $\hat R$ is chosen in such a way that $[\hat RK,\hat RL]$ is in reduced row echelon form then the intersection \eqref{eq:intersection_quasi_rref} is given by
	\begin{align}
		\label{eq:rref_intersection}
		\im K\cap\im L=\im \hat R^{-1}\begin{bmatrix}
			L_{12}\\ 0
		\end{bmatrix}.
	\end{align}
\end{remark}

In the following we apply Proposition~\ref{prop:intersection} to compute the intersection of the parameter dependent subspaces $\im N_1(\lambda)$ and $\im N_2(\mu)$ given by~\eqref{eq:N_via_adj}. To this end, we consider the \emph{quotient field} $\R(\lambda,\mu)$ of the ring $\R[\lambda,\mu]$ of polynomials $p(\lambda,\mu)=\sum_{i=0}^I\sum_{j=0}^Jp_{ij}\lambda^i\mu^j$ with real-valued coefficients $p_{ij}\in\R$,
see e.g.\ \cite[Section IV.\S 7]{Lan13}, i.e.\ the quotient field $\R(\lambda,\mu)$ simply contains all rational functions in two variables with real-valued coefficients.

Applying Proposition~\ref{prop:intersection} using $\F=\R(\lambda,\mu)$ to \begin{align*}K&=N_1(\lambda)=(\lambda I-A_1)^{-1}B_1,\\ L&=N_2(\mu)=(\mu I-A_2)^{-1}B_2,\end{align*} there exist matrix functions
$R(\lambda,\mu), \hat R(\lambda,\mu)\in\R(\lambda,\mu)^{n\times n}$ and 
$L_1(\lambda,\mu),L_2(\lambda,\mu),K_1(\lambda,\mu)$ 
of appropriate sizes such that \eqref{eq:R_L_decomp} 
and \eqref{eq:intersection_quasi_rref} hold.

This leads to a parameter-dependent representation 
\begin{align}
	\label{eq:thm_int_new}
	\hspace{-0.2cm}\im N_1(\lambda)\cap \im N_2(\mu)=\im Q(\lambda,\mu)
\end{align}
in terms of a rational matrix function $Q(\lambda,\mu)\in\R(\lambda,\mu)^{n\times r}$, which is determined depending on the ranks $r_2$ and $r_3$ as follows
\begin{align}
	\label{eq:def_Q_1}
	Q(\lambda,\mu):=\begin{cases}
		R(\lambda,\mu)^{-1}\!
		\begin{bmatrix}
			L_1(\lambda,\mu)\\0
		\end{bmatrix}\!,& \text{if $r_3=0$},\\[4ex]
		0,& \text{if $r_3=r_2>0$,}\\
		\hat{R}(\lambda,\mu)^{-1}\!\begin{bmatrix}
			\hat L_1(\lambda,\mu)\\ 0\\ 0
		\end{bmatrix}\!,& \text{if $r_2>r_3>0$,}
	\end{cases}
\end{align}
with $\hat L_1(\lambda,\mu)=L_{12}(\lambda,\mu)-L_{11}(\lambda,\mu)L_{21}(\lambda,\mu)^{-1}L_{22}(\lambda,\mu)$.

It is important to note that~\eqref{eq:thm_int_new} has to be understood as an equality of subspaces of rational functions where $\lambda$ and $\mu$ are not determined. Therefore, there might exist $(\lambda_0,\mu_0)\in\C\times\C$ for which $\im Q(\lambda_0,\mu_0)\neq\im N_1(\lambda_0)\cap\im N_2(\mu_0)$. Nevertheless, Proposition~\ref{prop:intersection} can still be applied, but using $\mathbb{F}=\mathbb{R}$ or $\mathbb{F}=\mathbb{C}$. In the following, we determine a set
\[
\Omega_0:=\big((\C\setminus\sigma(A_1))\times(\C\setminus\sigma(A_2))\big)\setminus \mathcal{P}(Q),
\]
where $\mathcal{P}(Q)$ is the \emph{set of poles} of the  rational matrix-valued function $Q(\lambda,\mu)$, i.e.\ the set of all $(\lambda_0,\mu_0)\in\C\times\C$ for which at least one of the entries of $Q(\lambda,\mu)$ has a pole at $(\lambda,\mu)=(\lambda_0,\mu_0)$.

In the following proposition, we summarize the above findings and it is shown that the set of exceptional points where the intersection cannot be computed using \eqref{eq:thm_int_new} is small.
\begin{prop}
	\label{cor:rref}
	Let $A_q\in\R^{n\times n}$, $B_q\in\R^{n\times p}$, $q=1,2$ and
	consider the two matrix-valued functions 
	$N_q$ which are given by \eqref{eq:N_via_adj} and the rational matrix in two variables $Q(\lambda,\mu)\in\R(\lambda,\mu)^{n\times r}$ given by \eqref{eq:def_Q_1}. Then there exists a dense set $\Omega\subseteq(\C\setminus\sigma(A_1))\times(\C\setminus\sigma(A_2))$ such that for all $(\lambda,\mu)\in \Omega$ the following holds 
	\begin{align}
		\label{eq:thm_int}
		\hspace{-0.2cm}\im N_1(\lambda)\cap \im N_2(\mu)=\im Q(\lambda,\mu).
	\end{align}
\end{prop}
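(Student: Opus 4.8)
\medskip

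The plan is to invoke Proposition~\ref{prop:intersection} over the quotient field $\F=\R(\lambda,\mu)$, applied to $K=N_1(\lambda)=\adj(\lambda I-A_1)B_1$ and $L=N_2(\mu)=\adj(\mu I-A_2)B_2$ as in \eqref{eq:N_via_adj}, both regarded as matrices with entries in $\R[\lambda,\mu]\subset\R(\lambda,\mu)$. According to whether the integer $r_3$ appearing there vanishes, equals $r_2$, or lies strictly between, this returns one of the descriptions $\{0\}$, \eqref{eq:r3_is_zero}, or \eqref{eq:intersection_quasi_rref} of the intersection, in every case of the form $\im_\F Q$ for an explicit $Q(\lambda,\mu)\in\R(\lambda,\mu)^{n\times r}$. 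I record two facts: first, $\rk_\F Q$ equals the generic dimension of the intersection, $d:=\rk_\F N_1+\rk_\F N_2-\rk_\F[N_1,N_2]$; second, since $\im_\F Q\subseteq\im_\F N_1$ and $\im_\F Q\subseteq\im_\F N_2$, there are rational matrices $X(\lambda,\mu),Y(\lambda,\mu)$ with $Q=N_1X=N_2Y$ over $\R(\lambda,\mu)$.

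Next I would define
\[
\Omega:=\big((\C\setminus\sigma(A_1))\times(\C\setminus\sigma(A_2))\big)\setminus\mathcal B,
\]
where $\mathcal B\subseteq\C^2$ collects (i) the pole sets $\mathcal P(Q)$, $\mathcal P(X)$, $\mathcal P(Y)$; and (ii) the sets of points at which the numerical rank of $N_1(\lambda)$, of $N_2(\mu)$, of $[N_1(\lambda),N_2(\mu)]$, or of $Q(\lambda,\mu)$ drops below its generic value. Each set in (i) is the zero set of the product of the (nonzero) denominator polynomials of the entries, hence a proper algebraic subset of $\C^2$; each set in (ii) is contained in the common zero set of the maximal-size minors of the matrix in question, at least one of which is not identically zero, hence is again a proper algebraic subset. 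Adjoining the finitely many coordinate lines forming the complement of $(\C\setminus\sigma(A_1))\times(\C\setminus\sigma(A_2))$, the whole excluded locus lies in the zero set of a single nonzero polynomial, so $\Omega$ is dense in $\C^2$ and a fortiori dense in $(\C\setminus\sigma(A_1))\times(\C\setminus\sigma(A_2))$.

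It remains to check \eqref{eq:thm_int} at an arbitrary $(\lambda_0,\mu_0)\in\Omega$. Since $Q,X,Y$ have no pole there, the identities $Q=N_1X=N_2Y$ evaluate to $Q(\lambda_0,\mu_0)=N_1(\lambda_0)X(\lambda_0,\mu_0)=N_2(\mu_0)Y(\lambda_0,\mu_0)$, whence $\im Q(\lambda_0,\mu_0)\subseteq\im N_1(\lambda_0)\cap\im N_2(\mu_0)$. On the other hand, by (ii) the numerical ranks of $N_1(\lambda_0)$, $N_2(\mu_0)$ and $[N_1(\lambda_0),N_2(\mu_0)]$ agree with their generic values, so the identity $\dim(\im A\cap\im B)=\rk A+\rk B-\rk[A,B]$ yields $\dim\big(\im N_1(\lambda_0)\cap\im N_2(\mu_0)\big)=d$, while $\rk Q(\lambda_0,\mu_0)=\rk_\F Q=d$, again by (ii). A $d$-dimensional subspace contained in a $d$-dimensional subspace coincides with it, which is exactly \eqref{eq:thm_int}.

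The crux — more a matter of careful bookkeeping than of depth — is this last specialization step: one must guarantee that $Q$ and the auxiliary matrices $X,Y$ are simultaneously pole-free at the chosen point and that none of the four relevant ranks collapses there, so that the computation carried out over the function field $\R(\lambda,\mu)$ can be read off verbatim at $(\lambda_0,\mu_0)$. Lower semicontinuity of matrix rank and the elementary fact that a proper algebraic subset of $\C^2$ has dense complement are precisely what make the set $\Omega$ of such "good" points nonempty and dense; beyond Proposition~\ref{prop:intersection} no new idea is required.
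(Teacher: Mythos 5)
Your proof is correct and follows essentially the same route as the paper: both apply Proposition~\ref{prop:intersection} over the quotient field $\R(\lambda,\mu)$ to $K=N_1$, $L=N_2$ and obtain density of $\Omega$ from the fact that the excluded locus is a proper algebraic subset of $\C^2$ (union finitely many coordinate lines). If anything, your version is more careful than the paper's, since you explicitly justify the specialization step --- via the factorizations $Q=N_1X=N_2Y$ and the rank/dimension count $\dim(\im N_1(\lambda_0)\cap\im N_2(\mu_0))=\rk Q(\lambda_0,\mu_0)=d$ --- whereas the paper only removes the poles of $Q$ (plus, in one case, two rank conditions) and leaves the pointwise verification of \eqref{eq:thm_int} implicit.
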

\begin{proof}
	The set $\Omega$ is given by
	\begin{align*}
		\!\!\Omega:=\begin{cases}
			\Omega_0, & \begin{matrix}\text{if $r_3=0$ or~~~~} \\  \text{~~~$r_2>r_3>0$,}\end{matrix}\\ 
			\begin{matrix*}[l]\big\{(z,w)\in\Omega_0:\rk R(z,w)=n, \\ ~~~~~~~~~~~~~~~~~~~~~\rk L_2(z,w)=r_2 \big\},\end{matrix*} & \text{if $r_3=r_2>0$,} 
		\end{cases}
	\end{align*}
	where we have to impose additional rank assumptions for $r_3=r_2>0$ to ensure the invertibility of $R(\lambda,\mu)$ and $L_2(\lambda,\mu)$ at the considered points.
	It remains to prove that $\Omega$ is dense. If  $r_3=0$ or $r_2>r_3>0$ hold, then $\Omega=\Omega_0$ and the following holds
	\begin{align*}
		\mathbb C \setminus \Omega \subset \mathcal{P}(Q) \cup
		\bigcup_{\lambda \in \sigma(A_1)} \{\lambda\} \times \mathbb C \cup
		\bigcup_{\mu \in \sigma(A_2)}\mathbb C \times  \{\mu\}.
	\end{align*}
	Obviously, the sets $\{\lambda\} \times \mathbb C$, $\mathbb C \times  \{\mu\}$ and $\mathcal{P}(Q)$ 
	have an empty interior. Therefore, the interior of $\C\setminus\Omega$ is empty which implies that $\Omega=\Omega_0$ is dense. If $r_2=r_3>0$ holds, then $\Omega$ is given by the intersection of a dense open set $\Omega_0$ with the dense set 
	\[
	\big\{(z,w)\in\C\times\C : \rk R(z,w)=n,\, \rk L_2(z,w)=r_2 \big\}
	\]
	and therefore $\Omega$ is dense in this case.
\end{proof}

For the dense set $\Omega$ constructed in Proposition~\ref{cor:rref} we consider also the following set 
\begin{align}
	\label{def_D_real}
	\Omega_{\R}&:=\Omega\cap(\R\times\R).
\end{align}   
It follows immediately from the construction of the set $\Omega$, that $\Omega_\R$ is also dense in $\R\times\R$.

For further analysis we shall rewrite the rational function $Q(\lambda,\mu)$ with entries $r_{i,j}(\lambda,\mu)=\tfrac{p_{i,j}(\lambda,\mu)}{q_{i,j}( \lambda,\mu)}$
satisfying~\eqref{eq:thm_int} as a polynomial matrix $P(\lambda,\mu)\in\R[\lambda,\mu]^{n\times r}$ on the set $\Omega$, which is obtained by multiplying each column of $Q(\lambda,\mu)$ with the least common multiple of all denominator polynomials $q_{i,j}(\lambda,\mu)$ of all of the entries of the selected column.  
More precisely, define $q_j(\lambda,\mu):=\prod_{i=1}^{n}q_{i,j}(\lambda,\mu)$, $j=1,\ldots,r$. Then $P\in\R[\lambda,\mu]^{n\times r}$ is explicitly given by
\begin{align}
	\label{def_P_from_Q}
	P(\lambda,\mu)=Q(\lambda,\mu)
	\begin{bmatrix}
		q_1(\lambda,\mu)&&0\\[-0.5ex]
		&\!\!\ddots\!\!&\\[-0.5ex]
		0& &q_{r}(\lambda,\mu)
	\end{bmatrix}.
\end{align} 

To characterize feedback rectifiability, we study the mapping $P\in\R[\lambda,\mu]^{n\times r}$ given by \eqref{def_P_from_Q} and consider the representation
\begin{align}
	\label{def:P}
	P(\lambda,\mu)=\sum_{k=0}^N\sum_{l=0}^NC_{kl}\lambda ^k\mu^l
\end{align}
for some $C_{kl}\in\R^{n\times r}$ and $N\geq0$. Furthermore we consider the union of the images of $P\in\R[\lambda,\mu]^{n\times r}$ evaluated on some set $G\subset\C\times\C$
\[
\Sp_G(P):=\Span\left(\bigcup_{(\lambda,\mu)\in G} \im P(\lambda,\mu)\right)\subseteq\C^n.
\]
Let $G\subseteq(\C\setminus\sigma(A_1))\times(\C\setminus\sigma(A_2))$ and choose $P$ as in \eqref{def_P_from_Q} then according to 
\eqref{eq:thm_int} the following holds
\begin{align}
	\label{spd_schnitt}
	\Sp_G(P)=\Span\big((N_1\cap N_2)(G)\big).
\end{align}

In the lemma below, we characterize when $\Sp_G(P)<n$ holds in terms of the matrix polynomial $P$. 

\begin{lemma}
	\label{lem:notfull}
	Let $P\in \R[\lambda,\mu]^{n\times r}$ and $G\subseteq\C\times \C$. Then the following holds.
	\begin{itemize}
		\item[\rm (a)]  If there exists $\alpha\in\C^n\setminus\{0\}$ such that $\alpha^\top P(\lambda,\mu)=0$ holds for all $(\lambda,\mu)\in G$ then $\dim\Sp_G(P)<n$.
		\item[\rm (b)] Let $G$ be a dense subset of $\R\times\R$.  If $\dim\Sp_G(P)<n$ then there exists $\alpha\in\R^n\setminus\{0\}$ such that $\alpha^\top P(\lambda,\mu)=0$ holds for all $(\lambda,\mu)\in\R\times\R$.
	\end{itemize}
\end{lemma}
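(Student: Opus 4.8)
The plan is to prove (a) by a direct linear-algebra argument and (b) by exploiting the polynomial (hence analytic) nature of $P$ together with density of $G$. For part (a), suppose $\alpha \in \C^n \setminus \{0\}$ satisfies $\alpha^\top P(\lambda,\mu) = 0$ for all $(\lambda,\mu) \in G$. Then for every such $(\lambda,\mu)$ and every vector $v \in \im P(\lambda,\mu)$ we have $\alpha^\top v = 0$, so the whole union $\bigcup_{(\lambda,\mu)\in G}\im P(\lambda,\mu)$ lies in the hyperplane $\{v : \alpha^\top v = 0\}$, which is a linear subspace of dimension $n-1$. Taking the span preserves this inclusion, so $\Sp_G(P) \subseteq \ker \alpha^\top$ and therefore $\dim \Sp_G(P) \le n-1 < n$. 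This is routine.

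For part (b), assume $\dim \Sp_G(P) < n$. Since $\Sp_G(P)$ is a proper subspace of $\C^n$, there exists $\alpha \in \C^n \setminus \{0\}$ with $\alpha^\top w = 0$ for all $w \in \Sp_G(P)$; in particular $\alpha^\top P(\lambda,\mu) = 0$ for all $(\lambda,\mu) \in G$. First I would argue that $\alpha$ can be chosen real: since $P$ has real coefficients, $\Sp_G(P)$ — although a priori viewed inside $\C^n$ — is the complexification of the real subspace $\Sp_G(P) \cap \R^n$ when $G \subseteq \R \times \R$, because each $\im P(\lambda,\mu)$ with $(\lambda,\mu)\in\R\times\R$ is spanned by real vectors; hence its orthogonal complement contains a real nonzero vector, and we take that as $\alpha \in \R^n \setminus \{0\}$. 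Now consider the row-vector-valued function $g(\lambda,\mu) := \alpha^\top P(\lambda,\mu)$. By the representation \eqref{def:P}, each entry of $g$ is a real polynomial in $(\lambda,\mu)$, and $g$ vanishes on the set $G$, which is dense in $\R \times \R$. A nonzero real polynomial in two variables cannot vanish on a dense subset of $\R^2$ (its zero set is a proper algebraic subvariety, which is closed with empty interior); applying this to each entry of $g$, all entries vanish identically, i.e. $g \equiv 0$ on $\R \times \R$. Thus $\alpha^\top P(\lambda,\mu) = 0$ for all $(\lambda,\mu) \in \R \times \R$, which is the claim.

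The only genuine subtlety — and the step I would be most careful about — is the reality of $\alpha$ in part (b): one must use that $P$ has real coefficients and that $G$ consists of real pairs, so that $\Sp_G(P)$ is a complexified real subspace and hence has a real annihilating functional. Everything else is standard: the passage from "vanishes on a dense set" to "vanishes identically" is just continuity of polynomials plus density, and the hyperplane containment in part (a) is immediate. I would also note in passing that the hypothesis "$G$ dense in $\R\times\R$" in (b) is what makes the polynomial-identity argument work; without it the statement would fail, e.g. if $G$ were a proper algebraic curve.
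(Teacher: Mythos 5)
Your proposal is correct and follows essentially the same route as the paper: part (a) by observing that the union of images lies in the hyperplane $\ker\alpha^{\top}$, and part (b) by taking an annihilator of the proper subspace $\Sp_G(P)$ and extending the vanishing from the dense set $G$ to all of $\R\times\R$ by continuity of the polynomial entries. In fact your treatment is slightly more careful than the paper's, which only exhibits $\alpha\in\C^n$ in part (b); your observation that $\Sp_G(P)$ is spanned by real vectors (so a real annihilator exists — equivalently, one may split $\alpha=\alpha_R+i\alpha_I$ and keep a nonzero real or imaginary part) is exactly what is needed to deliver the $\alpha\in\R^n\setminus\{0\}$ claimed in the statement.
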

\begin{proof}
	If $\alpha\in\C^n\setminus\{0\}$ fulfills the assumption in (a) then $\alpha$ is orthogonal to $\Sp_G(P)$. Hence $\dim\Sp_G(P)<n$. We continue with the proof of (b). If $\dim\Sp_G(P)<n$ then there exists $\alpha\in\C^n$ such that $\alpha^\top P(\lambda,\mu)=0$ holds for all $(\lambda,\mu)\in G$. Since $G$ is dense in $\C\times\C$, for all  $(\lambda,\mu)\in G$ there exists a sequence $((\lambda_l,\mu_l))_{l\in\N}$ in $G$ converging to $(\lambda,\mu)$. Then the continuity of $P$ implies for all $(\lambda,\mu)\in G$
	\[
	\alpha^\top P(\lambda,\mu)=\lim\limits_{l\rightarrow\infty}\alpha^\top P(\lambda_l,\mu_l)=0
	\]
	which proves (b). 
\end{proof}

Below we present the first main result. 
\begin{theorem}
	\label{thm:main_real}
	Let $A_q\in\R^{n\times n}$, $B_q\in\R^{n\times m_q}$, $q=1,2$ and let $P\in\R[\lambda,\mu]^{n\times r}$ and $\Omega_{\R}$ satisfy \eqref{def_D_real} and assume that $P$ is as in \eqref{def:P} for some matrices $C_{kl}\in\R^{n\times r}$. 
	
	The pairs $(A_q,B_q)$ are feedback rectifiable over $\Omega_{\R}$ if and only if the following condition holds
	\begin{align}
		\label{eq:ker_P}
		\ker \begin{bmatrix}
			C_{00} & \ldots& C_{0N}& C_{10}&\ldots& C_{NN}
		\end{bmatrix}^\top=\{0\}.
	\end{align}
\end{theorem}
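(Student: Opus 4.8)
The strategy is to connect the condition \eqref{eq:ker_P} to the span condition $\Sp_{\Omega_\R}(P) = \R^n$ via Lemma~\ref{lem:notfull}, and then invoke Proposition~\ref{prop:wieWHR}(a) together with the identity \eqref{spd_schnitt}. First I would recall from \eqref{spd_schnitt} that, since $\Omega_\R \subseteq (\C\setminus\sigma(A_1))\times(\C\setminus\sigma(A_2))$ and $P$ is built from $Q$ via \eqref{def_P_from_Q}, we have $\Sp_{\Omega_\R}(P) = \Span((N_1\cap N_2)(\Omega_\R))$. By Proposition~\ref{prop:wieWHR}(a), the pairs are feedback rectifiable over $\Omega_\R$ if and only if this span equals $\R^n$. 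So the entire theorem reduces to showing that \eqref{eq:ker_P} holds if and only if $\dim\Sp_{\Omega_\R}(P) = n$.

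For the equivalence of \eqref{eq:ker_P} with the span condition, I would argue through the ``annihilator'' characterization. Write $\mathcal{C} := \begin{bmatrix} C_{00} & \ldots & C_{NN}\end{bmatrix} \in \R^{n\times (N+1)^2 r}$ for the stacked coefficient matrix. The key observation is that for any $\alpha \in \C^n$, the vector-valued polynomial $\alpha^\top P(\lambda,\mu) = \sum_{k,l}(\alpha^\top C_{kl})\lambda^k\mu^l$ vanishes identically on $\R\times\R$ if and only if all its coefficients $\alpha^\top C_{kl}$ vanish, i.e.\ if and only if $\alpha^\top \mathcal{C} = 0$, i.e.\ $\alpha \in \ker \mathcal{C}^\top$. (Here I use that a polynomial in two real variables vanishing on all of $\R^2$ is the zero polynomial, which follows by iterating the single-variable fact.) Now combine the two directions of Lemma~\ref{lem:notfull}: if \eqref{eq:ker_P} fails, pick $0\neq\alpha\in\ker\mathcal{C}^\top$; then $\alpha^\top P \equiv 0$ on $\R\times\R \supseteq \Omega_\R$, so by Lemma~\ref{lem:notfull}(a) $\dim\Sp_{\Omega_\R}(P) < n$, hence not rectifiable. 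Conversely, if $\dim\Sp_{\Omega_\R}(P) < n$, then since $\Omega_\R$ is dense in $\R\times\R$ (as noted after \eqref{def_D_real}), Lemma~\ref{lem:notfull}(b) yields $0\neq\alpha\in\R^n$ with $\alpha^\top P \equiv 0$ on $\R\times\R$, hence $\alpha\in\ker\mathcal{C}^\top$, so \eqref{eq:ker_P} fails.

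Assembling the pieces: \eqref{eq:ker_P} $\iff \dim\Sp_{\Omega_\R}(P) = n \iff \Span((N_1\cap N_2)(\Omega_\R)) = \R^n \iff$ feedback rectifiable over $\Omega_\R$, where the last equivalence is Proposition~\ref{prop:wieWHR}(a). The one point that needs a little care is making sure Proposition~\ref{prop:wieWHR}(a) genuinely applies with $D = \Omega_\R$: that proposition is stated for arbitrary $D\subseteq\R\times\R$, and $\Omega_\R\subseteq\R\times\R$ by \eqref{def_D_real}, so this is immediate. Another subtlety worth a sentence is that in Lemma~\ref{lem:notfull}(b) the $\alpha$ is real, which is exactly what \eqref{eq:ker_P} demands (the kernel is taken over $\R$); and in part (a) we only needed $\alpha^\top P = 0$ on $G$, which we get for free from vanishing on all of $\R\times\R$.

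\textbf{Main obstacle.} The only genuinely non-routine step is the passage between ``$\alpha^\top P(\lambda,\mu)=0$ for all $(\lambda,\mu)\in\R\times\R$'' and ``$\alpha^\top C_{kl}=0$ for all $k,l$'' --- i.e.\ that a nonzero bivariate polynomial cannot vanish on all of $\R^2$. This is standard (a polynomial of degree $\le N$ in $\mu$ with more than $N$ roots is zero, applied coefficient-wise in $\lambda$), but it is the logical hinge that turns the analytic span condition into the clean algebraic kernel condition \eqref{eq:ker_P}, so I would state it explicitly rather than leave it implicit. Everything else is bookkeeping with results already established in the excerpt.
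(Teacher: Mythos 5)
Your proposal is correct and follows essentially the same route as the paper's proof: both reduce the theorem to the span condition via \eqref{spd_schnitt} and Proposition~\ref{prop:wieWHR}(a), and both convert that condition into \eqref{eq:ker_P} through Lemma~\ref{lem:notfull} together with the linear independence of the monomials $\lambda^k\mu^l$ (your ``a nonzero bivariate polynomial cannot vanish on $\R^2$'' hinge). The only cosmetic difference is that you phrase both directions contrapositively, while the paper argues the sufficiency direction directly by contradiction.
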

\begin{proof}
	Let \eqref{eq:ker_P} hold. If there exists  $\alpha\in\R^n\setminus\{0\}$ satisfying $\alpha^\top P(\lambda,\mu)=0$ for all $(\lambda,\mu)\in\R\times\R$. Then with \eqref{def:P} we get
	\begin{align}
		\label{eq:lincomzero}
		\alpha^TP(\lambda,\mu)=\sum_{k=0}^N\sum_{l=0}^N\alpha^TC_{kl}\lambda ^k\mu^l=0\in\R^{1\times r}.
	\end{align}
	Further, in the vector space $\R[\lambda,\mu]$ over $\R$, the set of polynomials $\{\lambda ^k\mu^l\}_{k,l=0}^N$ is linearly independent. Hence \eqref{eq:lincomzero} implies 
	\[
	\alpha^TC_{kl}=0\quad \text{for all $k,l=1,\ldots,N$,}
	\]
	but this contradicts \eqref{eq:ker_P}. 
	Hence, there exists no $\alpha\in\R^n\setminus\{0\}$ satisfying $\alpha^TP(\lambda,\mu)=0$ for all $(\lambda,\mu)\in\R \times \R$. 
	Therefore, by \eqref{spd_schnitt} and Lemma~\ref{lem:notfull}~(b) and Proposition~\ref{cor:rref}, we conclude $\Span(N_1\cap N_2)(\Omega_{\R})=\Sp_{\Omega_\R}(P)=\R^n$. 
	Thus, Proposition~\ref{prop:wieWHR}~(a) implies that the pairs $(A_q,B_q),\; q=1,2$ are feedback rectifiable over~$\Omega_{\R}$.
	
	If the pair is feedback rectifiable over $\Omega_{\R}$ then Proposition~\ref{prop:wieWHR}~(a) and \eqref{spd_schnitt} lead to $\Span(N_1\cap N_2)(\Omega_{\R})=\R^n$, but then $\R^n\supseteq\Sp_{\R\times\R}(P)\supseteq\Sp_{\Omega_{\R}}(P)=\R^n$ implies that $\Sp_{\R\times\R}(P)=\R^n$ and therefore there exists no $\alpha\in\R^n\setminus\{0\}$ satisfying $\alpha^TP(\lambda,\mu)=0$ for all $(\lambda,\mu)\in\C\times\C$. As shown above, this implies \eqref{eq:ker_P}.
\end{proof}

If we combine Theorem~\ref{thm:main_real} with the fact that feedback rectifiability over $\Omega_{\R}$ also implies feedback rectifiability over the larger set $\Omega$, and, by applying Proposition~\ref{prop:wieWHR} (b) we obtain the following necessary and sufficient condition.
\begin{cor}
	Let $A_q\in\R^{n\times n}$, $B_q\in\R^{n\times m_q}$, $q=1,2$ and let $\Omega$ by given by Proposition~\ref{cor:rref} and assume that $P\in\R[\lambda,\mu]^{n\times r}$ is as in \eqref{def:P}. 
	Then the pairs $(A_q,B_q)$ are feedback rectifiable over $\Omega$ if and only if \eqref{eq:ker_P} holds.
\end{cor}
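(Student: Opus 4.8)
The plan is to derive this corollary essentially for free from the two results that immediately precede it, namely Theorem~\ref{thm:main_real} and Proposition~\ref{prop:wieWHR}~(b), using the monotonicity of feedback rectifiability under enlargement of the underlying set $D$. First I would record the obvious but crucial observation: if $\Omega_\R \subseteq \Omega$ and the pairs $(A_q,B_q)$ are feedback rectifiable over $\Omega_\R$, then they are feedback rectifiable over $\Omega$ as well. This is immediate from Definition~\ref{def:fbrect}, since the pairs $(\lambda_i,\mu_i) \in \Omega_\R$ witnessing rectifiability over $\Omega_\R$ also lie in $\Omega$; no construction is needed.

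Next I would prove the ``if'' direction. Assume \eqref{eq:ker_P} holds. By Theorem~\ref{thm:main_real}, the pairs $(A_q,B_q)$ are feedback rectifiable over $\Omega_\R$. By the monotonicity observation just made, they are therefore feedback rectifiable over $\Omega \supseteq \Omega_\R$, which is the desired conclusion.

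For the ``only if'' direction, assume the pairs are feedback rectifiable over $\Omega$, with $\Omega \subseteq (\C\setminus\sigma(A_1))\times(\C\setminus\sigma(A_2)) \subseteq \C\times\C$. By Proposition~\ref{prop:wieWHR}~(b), this forces $\Span((N_1\cap N_2)(\Omega)) = \C^n$. Since $\Omega$ was chosen exactly so that \eqref{eq:thm_int} holds there and $P$ is built from $Q$ as in \eqref{def_P_from_Q}, the identity \eqref{spd_schnitt} gives $\Sp_\Omega(P) = \Span((N_1\cap N_2)(\Omega)) = \C^n$. Hence there is no $\alpha \in \C^n\setminus\{0\}$ with $\alpha^\top P(\lambda,\mu) = 0$ for all $(\lambda,\mu)\in\Omega$; by the linear independence of the monomials $\{\lambda^k\mu^l\}_{k,l=0}^N$ in $\R[\lambda,\mu]$ (or rather in $\C[\lambda,\mu]$, evaluated on the dense set $\Omega$), exactly as in the last paragraph of the proof of Theorem~\ref{thm:main_real}, this yields that no nonzero $\alpha$ can annihilate all the coefficient matrices $C_{kl}$, i.e.\ \eqref{eq:ker_P} holds.

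The only mild subtlety — and the step I would be most careful about — is the passage from ``$\Sp_\Omega(P) = \C^n$'' to ``\eqref{eq:ker_P}'' over the complex field rather than the real field, since \eqref{eq:ker_P} is a statement about the real kernel of a real matrix. This is harmless: the stacked matrix $[C_{00}\ \cdots\ C_{NN}]^\top$ is real, so its kernel is trivial over $\R$ if and only if it is trivial over $\C$; equivalently one may run the argument of Lemma~\ref{lem:notfull} and Theorem~\ref{thm:main_real} verbatim with $\C$ in place of $\R$ and the density of $\Omega$ in $\C\times\C$ from Proposition~\ref{cor:rref}. I would state this one-line remark explicitly rather than leave it implicit. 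Everything else is a direct quotation of the preceding machinery.
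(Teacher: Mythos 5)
Your proposal is correct and follows the paper's own (one-sentence) argument exactly: the ``if'' direction via Theorem~\ref{thm:main_real} plus monotonicity of rectifiability under enlarging the set from $\Omega_\R$ to $\Omega$, and the ``only if'' direction via Proposition~\ref{prop:wieWHR}~(b) combined with the coefficient-matrix argument from the proof of Theorem~\ref{thm:main_real}. Your explicit remark that the real matrix $[C_{00}\ \cdots\ C_{NN}]^\top$ has trivial kernel over $\R$ iff over $\C$ is a useful clarification the paper leaves implicit, but it does not change the route.
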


Note in particular, if \eqref{eq:ker_P} does not hold for the set $\Omega_\R$, then the pairs are also not feedback rectifiable over any extension $\Theta\supset\Omega_\R$.

\section{Construction of the feedback rectification}\label{sec:construction}

In this section we consider pairs of matrices for which 
\begin{align}
	\label{eq:ranktildeN}
	\rk\begin{bmatrix}
		N_1(\lambda) &N_2(\mu)
	\end{bmatrix}=n
\end{align}
holds for some points $(\lambda,\mu)\in\C\times\C$.
We obtain an explicit formula for the intersection $\im N_1(\lambda) \cap \im N_2(\mu)$ which allows us to specify a set $D$ for rectification and to formulate a constructive algorithm for the design of the feedback matrices.

In particular consider pairs of matrices satisfying the following assumption.

\begin{assumption}
	\label{ass:ordering}
	Let $A_q\in\R^{n\times n}$, $B_q\in\R^{n\times m_q}$, $q=1,2$,  let \eqref{eq:nec_for_stabilization} be valid and let $\Theta\subset\C\times\C$.
	We assume that condition \eqref{eq:ranktildeN} holds for all $(\lambda,\mu)\in \Theta$.
\end{assumption}
Note that \eqref{eq:ranktildeN} holds for $n=3$ and $m_1=m_2=2$ and controllable subsystems,  \cite{WulHR21}. 
Hitherto we have not seen a practically relevant example with $n>3$, for which Assumption \ref{ass:ordering} does not hold.

Define 
\begin{align}
	\label{def:Nhat}
	\hat N(\lambda,\mu):=\begin{bmatrix} N_1(\lambda) & N_2(\mu)\!\begin{bmatrix} I_{n-p}\\0
	\end{bmatrix}\end{bmatrix} \!\!\in\R^{n\times n}.
\end{align}
Without loss of generality we can reorder the columns of $B_1$ and $B_2$ such that  $\hat N(\lambda,\mu)$ is invertible for all $(\lambda,\mu)\in\Theta$, if condition \eqref{eq:ranktildeN} holds.

The matrix $\hat N(\lambda,\mu)$ in \eqref{def:Nhat} is used in the following corollary to compute the intersection of $\im N_1(\lambda)$ and $\im N_2(\mu)$ based on Remark~\ref{rem:lem41}. 

\begin{cor} \label{cor:schnitt_mit_echelon}
	Let $A_q\in\R^{n\times n}$ and $B_q\in\R^{n\times p}$, $q=1,2$, and $(\lambda,\mu)\in\Theta$ satisfy Assumption~\ref{ass:ordering}. 
	Then the following holds.
	\begin{itemize}
		\item[\rm (a)] We have:
		$\dim\big(\im N_1(\lambda)\cap\im N_2(\mu)\big)\geq 1$. 
		\item[\rm (b)]  For $L_{12}\in\R^{m_1\times(p-r_2)}$ and $L_{22}\in\R^{(n-m_1)\times(p-r_2)}$ given by 
		\begin{align} \label{eq:def_L12}
			\begin{bmatrix}
				L_{12}\\ L_{22}
			\end{bmatrix}
			:=\hat N(\lambda,\mu)^{-1}\begin{bmatrix}
				N_1(\lambda) & N_2(\mu)
			\end{bmatrix}
			\begin{bmatrix}
				0\\I_{p-r_2}
			\end{bmatrix}
		\end{align}
		we have $L_{12}\neq 0$ and the following holds
		\begin{align}
			\label{eq:int_with_Nhat}
			\im N_1(\lambda)\cap\im N_2(\mu)=\im \hat N(\lambda,\mu)\!\begin{bmatrix}
				L_{12}\\0
			\end{bmatrix}\!.
		\end{align}
	\end{itemize}
\end{cor}
\begin{proof}
	For $(\lambda,\mu)\in\Theta$ for which Assumption \ref{ass:ordering} holds, we obtain the following estimate
	\[
	\dim\ker\begin{bmatrix}
		N_1(\lambda) & N_2(\mu)
	\end{bmatrix} =2p-n
	\geq p+m_2-n\geq 1.
	\]
	As a consequence, there exist nonzero $x,y\in\R^p$ fulfilling $N_1(\lambda)x=-N_2(\mu)y$. Since $\rk B_1=p=m_1$ holds, we have that $N_1(\lambda)x\neq 0$ holds which proves (a).
	
	To prove (b), we use the definition of the inverse and Assumption~\ref{ass:ordering} which implies 
	\begin{align}\label{eq:def_part_echolon}
		\hat N(\lambda,\mu)^{-1}\begin{bmatrix}
			N_1(\lambda) & N_2(\mu)
		\end{bmatrix}
		=\begin{bmatrix}
			I_{p}&0& L_{12}\\0&\!\!I_{n-p}\!\!&L_{22}
		\end{bmatrix}\!\!.
	\end{align}
	Hence, the intersection is given by
	\begin{multline*}
		\hat N(\lambda,\mu)^{-1}\big(\im N_1(\lambda)\cap\im N_2(\mu)\big)\\
		=\im \begin{bmatrix}
			I_p\\ 0
		\end{bmatrix}\cap\im \begin{bmatrix}
			0& L_{12}\\ I_{n-p}& L_{22}
		\end{bmatrix} 
	\end{multline*}
	which yields \eqref{eq:int_with_Nhat}.
\end{proof}

Note that in Corollary~\ref{cor:schnitt_mit_echelon} the point $(\lambda,\mu)\in\C\times\C$ is fixed whereas in Proposition~\ref{cor:rref} the intersection of  $N_1(\lambda)$ and $N_2(\mu)$ is determined as a function of $\lambda$ and $\mu$ on a dense set $\Omega$, which might exclude certain points such as $\lambda\in\sigma(A_1)$ and $\mu\in \sigma(A_2)$.
In Assumption~\ref{ass:ordering} and Corollary~\ref{cor:schnitt_mit_echelon} we consider a possibly larger set $\Theta$, which may also contain the eigenvalues of $A_1$ and $A_2$ as discussed in Remark \ref{rem:cont}. Note that for $(\lambda,\mu)\in\Omega\cap\Theta$ the images of the  matrix $Q(\lambda,\mu)$ used in Proposition~\ref{prop:intersection} and the matrix on the right hand side in \eqref{eq:int_with_Nhat} coincide.

Next, we use Proposition~\ref{prop:wieWHR} (c) to obtain a sufficient condition for feedback rectifiability over $\Theta\subset \C\times\C$.
\begin{theorem}
	\label{thm:main_cplx}
	Let $A_q\in\R^{n\times n}$, $B_q\in\R^{n\times p}$, $q=1,2$. 
	Let there exist $\{(\lambda_1,\mu_1),\ldots,(\lambda_n,\mu_n)\}\subseteq\Theta$ 
	satisfying Assumption~\ref{ass:ordering} such that for all non-real $(\lambda_l,\mu_l)$ there exists some $j$ satisfying $(\lambda_l,\mu_l)=(\overline{\lambda_j},\overline{\mu_j})$ and consider
	\begin{align}\label{eq:def_E}
		E_i:=\hat N(\lambda_i,\mu_i)\begin{bmatrix}
			L_{12}\\ 0
		\end{bmatrix},\; i=1,\ldots,n,
	\end{align}
	with $\hat N(\lambda_i,\mu_i)$ as given by \eqref{def:Nhat} and $L_{12}$ in \eqref{eq:def_L12}. Then, the pairs $(A_q,B_q)$, are feedback rectifiable over $\Theta$ if 
	\begin{align}
		\label{eq:ker_Ei}    
		\ker\begin{bmatrix}
			E_{1} & E_{2}& \ldots& E_{n}\end{bmatrix}^\top=\{0\}.
	\end{align}
\end{theorem}
\begin{proof}
	From \eqref{eq:int_with_Nhat} we find for all $i=1,\ldots,n$
	\[
	\im N_1(\lambda_i)\cap\im N_2(\mu_i)=\im \hat N(\lambda_i,\mu_i)\!\begin{bmatrix}
		L_{12}\\0\end{bmatrix}=\im E_i.
	\]
	This, together with \eqref{eq:ker_Ei} implies 
	$\im\!\begin{bmatrix}
		E_1 & \ldots & E_n
	\end{bmatrix}=\C^n$. 
	Hence, Proposition~\ref{prop:wieWHR}~(c) implies that the pairs are feedback rectifiable over $\Theta$. 
\end{proof}

With the previous results and the construction of the intersection \eqref{eq:int_with_Nhat} at hand we obtain the following procedure for the design of rectifying feedback.
\begin{alg}
	\label{alg:F}
	If the pairs $(A_1,B_1)$ and $(A_2,B_2)$ are feedback rectifiable and satisfy Assumption~\ref{ass:ordering} then the feedback matrices $F_q$ satisfying equation \eqref{def:ferec} can be constructed as follows.
	\begin{itemize}
		\setlength{\itemindent}{2em}
		\item[\bfseries{Step 1}] Calculate the subspaces $\im\begin{bmatrix}N_q(\lambda)^\top& M_q(\lambda)^\top\end{bmatrix}^\top$, $q=1,2$ in \eqref{eq:ker_AqBq}.
		\item[\bfseries{Step 2}] Determine $\hat{N}(\lambda,\mu)$ and $\Theta\subset\C\times\C$ in \eqref{def:Nhat}.
		\item[\bfseries{Step 3}] Calculate the intersection \eqref{eq:int_with_Nhat} following Corollary \ref{cor:schnitt_mit_echelon} and check whether \eqref{eq:ker_P} holds.
		\item[\bfseries{Step 4}] Choose the eigenvalues $(\lambda_i,\mu_i)\in \Theta$ for both closed-loop subsystems.
		\item[\bfseries{Step 5}] Choose $c_i\in \C^{p-r_2}$, $i=1,\ldots n$, such that \mbox{$v_i=N_1(\lambda_i)L_{12}(\lambda_i,\mu_i)c_i$} are linear independent eigenvectors of the closed-loop system satisfying \eqref{eq:int_with_Nhat}.
		\item[\bfseries{Step 6}]    The parameter vectors $w_{1i}\in\R^p$ in equation \eqref{eq: parametervector} are obtained by $w_{1i}=L_{12}(\lambda_i,\mu_i)c_i$.  
		\item[\bfseries{Step 7}] The parameter vectors $w_{2i}$ for the second system can be found by solving $v_i=N_2(\mu_i)w_{2i}$.
		\item[\bfseries{Step 8}] Calculate the feedback matrices $F_1$ and $F_2$ as in~\eqref{eq:construction_F_q}. 
	\end{itemize}
\end{alg}

We continue Example \ref{ex:continuous} to illustrate the application of the results obtained in this section.
We consider the stabilization of a fourth-order switched system with three inputs and two modes.
In order to calculate the intersection we resort to the reduced row echelon form utilized in Corollary~\ref{cor:schnitt_mit_echelon}.
The polynomial matrix representation \eqref{def:P} is used to verify the rectifiability of the two pairs via Theorem \ref{thm:main_cplx}.
The procedure given by Algorithm \ref{alg:F} is then applied to  choose the eigenvectors and obtain the feedback matrices of the control law \eqref{eq:control_law}.

\begin{ex}
	Consider the switched system \eqref{eq:sys_switched_process} with system matrices and $N_1(\lambda)$, $N_2(\mu)$ in Example~\ref{ex:continuous}.
	In order to calculate the intersections of the spaces spanned by $N_1(\lambda)$ and $N_2(\mu)$ we employ Corollary \ref{cor:schnitt_mit_echelon}.
	Note that $m_1+m_2\geq n+1$
	and combining $N_1(\lambda)$ with the first column of $N_2(\mu)$ we obtain an invertible $\hat N(\lambda,\mu)$ for $\Theta = \big(\C\setminus\sigma(A_1)\times\C\setminus\sigma(A_2)\big)\setminus\big\{(\lambda,\mu)\,|\, 2\lambda=\mu\big\}$. 
	This yields 
	{\small 
		\begin{align*}
			\hat N(\lambda,\mu)^{-1}\!=\!
			\begin{bmatrix}
				\frac{1}{\lambda ^3} & 0 & \frac{1}{\lambda ^4} & 0\\[1ex]
				\frac{1}{\lambda ^3} & -\frac{2}{\lambda ^2 (2\,\lambda -\mu )} & \frac{2\,\lambda -\mu +3\,\lambda \,\mu -2\,\lambda ^2}{\lambda ^4\,(2\,\lambda -\mu )} & \frac{\mu }{\lambda ^2\,(2\,\lambda -\mu )}\\[1ex]
				0 & 0 & -\frac{1}{\lambda ^3} & 0\\[1ex]
				0 & -\frac{1}{\mu ^2\,(2\,\lambda -\mu )} & \frac{2}{\mu ^2\,(2\,\lambda -\mu )} & \frac{\lambda }{\mu ^2\,(2\,\lambda -\mu )}
			\end{bmatrix}\!.
		\end{align*}
	}%

	The reduced row echelon form of $\begin{bmatrix}
		N_1(\lambda)& N_2(\mu)
	\end{bmatrix}$ is obtained  by multiplication with $\hat N(\lambda,\mu)^{-1}$, i.e.\ 
	{\small 
		\begin{multline*}
			\begin{bmatrix}\adj(\lambda I-A_1)B_1& \adj(\lambda I-A_2)B_2\end{bmatrix}=\\
			=    \hat N(\lambda,\mu)^{-1} 
			\begin{bmatrix}
				N_1(\lambda)& N_2(\mu)
			\end{bmatrix}\\
			=\begin{bmatrix} 
				1 & 0 & 0 & 0 & \frac{\mu ^2(\lambda +\mu )}{\lambda ^4} & \\[1ex]
				0 & 1 & 0 & 0 & \frac{\mu ^2(\lambda ^2\mu ^2-\lambda ^2\mu +2\,\lambda ^2+3\lambda \,\mu ^2+\lambda \mu -\mu ^2)}{\lambda ^4(2\lambda -\mu )} & \quad\star\quad \\[1ex]
				0 & 0 & 1 & 0 & -\frac{\mu ^3}{\lambda ^3} & \\[1ex]
				0 & 0 & 0 & 1 & \frac{\lambda +2\mu +\lambda \mu }{2\lambda -\mu } & 
			\end{bmatrix}\!,
		\end{multline*}
	}%
	where $\star$ denotes the repetition of the second last column.
	
	Identifying the upper $m_1=3$ entries of the last two columns in the above matrix with $L_{12}$ in \eqref{eq:int_with_Nhat} we obtain the intersection by 
	{\small 
		\begin{multline}
			\label{eq:SchnittExample}
			\im N_1(\lambda)\cap\im N_2(\mu)\!=\! \im \hat N(\lambda,\mu)\begin{bmatrix}
				L_{12}\\0
			\end{bmatrix}\\
			\!=\!\Span\left\{\left[\begin{array}{cc}  \mu ^2 & \mu ^2\\[1ex]
				-\frac{\mu ^3    (\lambda +2    \mu +\lambda     \mu )}{2    \lambda -\mu } & -\frac{\mu ^3    (\lambda +2    \mu +\lambda     \mu )}{2    \lambda -\mu }\\[1ex]
				\mu ^3 & \mu ^3\\[1ex]
				-\frac{\mu ^3    (\mu +5)}{2    \lambda -\mu } & -\frac{\mu ^3    (\mu +5)}{2    \lambda -\mu }  \end{array}\right]\right\}.
		\end{multline}
	}%
	
	We can now choose pairs of eigenvalues to be assigned $\{(\lambda_1,\mu_1),\ldots, (\lambda_4,\mu_4)\}$, self-conjugate with negative real parts, satisfying $2\lambda_i\neq \mu_i$.
	Substitute these pairwise into the matrix in \eqref{eq:SchnittExample} to obtain the matrices $E_i$ in \eqref{eq:def_E} of Theorem \ref{thm:main_cplx}.
	If the matrix $\begin{bmatrix}
		E_1&E_2&E_3&E_4 
	\end{bmatrix}$ 
	has full rank, we can simultaneously diagonalize the two subsystems assigning the closed-loop eigenvalues chosen.
	Continue with Step 5 of Algorithm \ref{alg:F} to choose common eigenvectors for the two subsystems and obtain the corresponding feedback matrix.
	
	Note that such heuristic strategy may not succeed for some systems, see Example \ref{ex:defective}.
	However, for this example we can use Theorem \ref{thm:main_real} to prove that the pairs $(A_1, B_1)$,$(A_2, B_2)$ are feedback rectifiable over $\Omega_\R=\big((\R\setminus\sigma(A_1))\times(\R\setminus\sigma(A_2))\big)\setminus\{(\lambda,\mu)\,|\,2\lambda=\mu\}$.
	
	The rational matrix-valued function in \eqref{eq:SchnittExample} is a representation of $Q(\lambda,\mu)$ in~\eqref{eq:thm_int}.
	Accordingly the set of poles $\mathcal{P}(Q)$ consists of the roots of the polynomial $(2\lambda-\mu)$ which are excluded from the set $\Theta$.
	Note that the computation of the intersection valid for $(\lambda,\mu)\in D_p:=\big\{(\R\setminus\sigma(A_1))\times(\R\setminus\sigma(A_2))|\,2\lambda=\mu\big\}$ may yield feedback rectifiability for points excluded in $\Omega_\R$, see Example~\ref{ex:defective}.

	The polynomial matrix in \eqref{def:P} is given by 
	{\small 
		\begin{align} \label{eq:ex_P}
			P(\lambda, \mu)&=
			\begin{bmatrix}  \mu ^2\,(2\,\lambda -\mu )\\ 
				-\mu ^3\,(\lambda +2\,\mu +\lambda \,\mu )\\ 
				\mu ^3\,(2\,\lambda -\mu )\\ 
				-\mu ^3\,(\mu +5)\end{bmatrix}\\[1ex] \notag
			&=C_{03}\mu^3 + C_{04}\mu^4 + C_{12}\lambda\mu^2 +  C_{13} \lambda\mu^3+C_{14} \lambda \mu^4
		\end{align}
	}%
	with 
	{\small 
		\begin{align*}
			C_{03}\!=\!\begin{bmatrix}-1\\0\\0\\-5\end{bmatrix}\!,
			C_{04}\!=\!\begin{bmatrix}
				0\\-2\\-1\\-1
			\end{bmatrix}\!,
			C_{12}\!=\!\begin{bmatrix}
				2\\0\\0\\0
			\end{bmatrix}\!,
			C_{13}\!=\!\begin{bmatrix}0\\ -1\\2\\0\end{bmatrix}\!,
			C_{14}\!=\!\begin{bmatrix}
				0\\-1\\0\\0
			\end{bmatrix}\!.
		\end{align*}
	}%
	The matrix $\begin{bmatrix}
		C_{03} &C_{04} &C_{12} &C_{13} &C_{14} 
	\end{bmatrix}$ has full rank.
	Thus by Theorem \ref{thm:main_real}, the two pairs are feedback rectifiable over $\Omega_\R$. 
	Since $\Omega_\R\subset\Theta$ we have feedback rectifiability over $\Theta$. 
	It follows immediately, that the switched system~\eqref{eq:sys_switched_process} is stabilizable via rectification, c.f. Theorem~\ref{thm:stabilizable}.

	Next we design the control law \eqref{eq:control_law} that stabilizes the switched system.
	Therefore we constructed the feedback matrices $F_q$ from equation \eqref{def:ferec} using Algorithm \ref{alg:F}.
	
	We choose the eigenvalues of the closed-loop system~\eqref{eq:sys_closedloop} as $\lambda_1=-3$, $\lambda_2=-1$, $\lambda_3=-2$, $\lambda_4=-4$ for mode $q=1$, and $\mu_1=-1$,  $\mu_2=-3$, $\mu_3=-2$ and  $\mu_4=-4$ for mode $q=2$.
	The assignable eigenvectors $v_i$ can be chosen from $\im P(\lambda_i,\mu_i)$.
	Substituting $(\lambda_i,\mu_i)$ into \eqref{eq:ex_P} yields
	{\small 
		\begin{align*}
			v_1=\begin{bmatrix}
				5\\2\\-5\\-4
			\end{bmatrix}, \quad v_2=\begin{bmatrix}
				1\\-12\\-3\\6
			\end{bmatrix}, \quad v_3=\begin{bmatrix}
				1\\2\\-2\\-3
			\end{bmatrix}, \quad v_4=\begin{bmatrix}
				1\\-4\\-4\\-1
			\end{bmatrix}.
		\end{align*}
	}%
	
	Due to the dimensions of the system in this example the eigenvector for each pair $(\lambda_i,\mu_i)$ is uniquely defined. 
	For higher-order systems with larger number of inputs the intersection space for each pair of eigenvalues may be increased, such that $P(\lambda_i,\mu_i)$ in~\eqref{eq:ex_P} has several columns.
	In such case the eigenvector associated with a pair of eigenvalues can be chosen from a set parameterized by the vector $c_i\in \dom P(\lambda_i,\mu_i)$.
	
	Using this vector we determine the parameter vectors~$w_{1i}$ in \eqref{eq: parametervector} for subsystem $q=1$
	\begin{align*}
		w_{1i}=L_{12}(\lambda_i,\mu_i)c_i
	\end{align*}
	resulting in
	{\small 
		\begin{align*}
			w_{11}=\begin{bmatrix}
				-\frac{4}{81}\\[1ex] -\frac{29}{405}\\[1ex] -\frac{1}{27} 
			\end{bmatrix}\!,\;  w_{12}=\begin{bmatrix}
				-36\\[1ex] -171\\[1ex] -27 
			\end{bmatrix}\!,\;  w_{13}=\begin{bmatrix}
				-1\\[1ex] -1\\[1ex] -1 
			\end{bmatrix}\!,\;  w_{14}=\begin{bmatrix}
				-\frac{1}{2}\\[1ex] -\frac{5}{2}\\[1ex] -1 
			\end{bmatrix}\!.
		\end{align*}
	}%
	By solving $v_i=N_2(\lambda_2)w_{2i}$ for the second system we obtain the parameter vectors 
	{\small 
		\begin{align*}
			w_{21}=\begin{bmatrix}
				-\frac{2}{5}\\[1ex] 1\\[1ex] 0
			\end{bmatrix}\!,\;   w_{22}=\begin{bmatrix}
				4\\[1ex] 1\\[1ex] 0
			\end{bmatrix}\!,\;   w_{23}=\begin{bmatrix}
				-1\\[1ex] 1\\[1ex] 0
			\end{bmatrix}\!,\;   w_{24}=\begin{bmatrix}
				1\\[1ex] 1\\[1ex] 0 
			\end{bmatrix}\!.
		\end{align*}
	}%
	
	Note that the last two columns of $N_2(\mu)$ are linearly dependent and therefore the last column does not provide any degree of freedom. 
	As a result, the last entry of each $w_{2i}$ is calculated to zero.
	
	Finally, the feedback gains \eqref{eq:moore_calc_F} for each subsystem are calculated to 
	{\small 
		\begin{align*}
			F_1\!=\!\begin{bmatrix}
				-\frac{29}{2} & 14 & -\frac{41}{2} & \frac{39}{2}\\[1ex]
				-\frac{337}{4} & 105 & -\frac{609}{4} & \frac{579}{4}\\[1ex]
				-\frac{63}{2} & 39 & -\frac{113}{2} & \frac{109}{2}
			\end{bmatrix}\!\!,\;
			F_2\!=\! \begin{bmatrix}\frac{15}{4} & -8 & \frac{27}{4} & -\frac{29}{4}\\[1ex]
				\frac{21}{2} & -17 & \frac{43}{2} & -\frac{47}{2}\\[1ex]
				0 & 0 & 0 & 0\end{bmatrix}\!,
		\end{align*}
	}%
	providing the closed-loop system matrices $A_q+B_qF_q$, \mbox{$q=1,2$}, with the eigenvalues and eigenvectors chosen above.
	
	Note again that the last column of the input matrix~$B_2$ does not provide any further degrees of freedom. 
	This results in the zero row of the feedback matrix $F_2$.
	In fact, $F_2$ includes a standard input transformation which leads to an input matrix with full rank.
\end{ex}

\begin{ex}
	\label{ex:defective}
	The following example demonstrates that even though the system is not feedback rectifiable by constructing the intersection by \eqref{eq:int_with_Nhat} of Corollary \ref{cor:schnitt_mit_echelon} it might be feedback rectifiable over another set $D_p$.
	
	Consider the systems $(A_1, B_1)$ and $(A_2 ,B_2)$ given by 
	{\small 
		\begin{align*}
			A_1&=\begin{bmatrix}0 & 0 & 1 & 0\\ 0 & 2 & 0 & 0\\ 0 & 0 & 0 & 1\\ 0 & -1 & 2 & 1\end{bmatrix}\!, &&   B_1=\begin{bmatrix} 1 & 0 & 0\\ 0 & 1 & 0\\ 0 & 0 & 0\\ 0 & 0 & 1 \end{bmatrix}\!,\\
			A_2&=\begin{bmatrix}  1 & 0 & 0 & 0\\ 0 & 0 & 0 & -2\\ 0 & 0 & 0 & 1\\ 1 & 2 & 2 & 1 \end{bmatrix}\!, &&  B_2=B_1 .
		\end{align*}
	}%
	Choosing the approach of Corollary \ref{cor:schnitt_mit_echelon}, the intersection can be computed with the transformation matrix
	\begin{align}\label{eq:ex_defect_Nhat}
		\hat N(\lambda,\mu)^{-1}=\begin{bmatrix} X&Y\end{bmatrix}
	\end{align}
	with
	{\small
		\begin{align*}
			X&=\begin{bmatrix}
				\frac{1}{(\lambda +1)\,{(\lambda -2)}^2}    & 0 \\[1ex]
				0 & \frac{1}{\lambda (\lambda+1)(\lambda-2)} \\[1ex]
				0 & \frac{1}{\lambda (\lambda +1){(\lambda -2)}^2} \\[1ex]
				0 & 0   
			\end{bmatrix}\\[1.5ex]
			Y&=\begin{bmatrix}
				\frac{-\lambda ^2\,\mu ^2+\lambda ^2\,\mu -2\,\lambda ^2+\mu }{\lambda \,(\lambda -\mu )\,(\lambda +1)\,{(\lambda -2)}^2} & \frac{\lambda \,\mu ^2-\lambda \,\mu +2\,\lambda -1}{\lambda \,(\lambda -\mu )\,(\lambda +1)\,{(\lambda -2)}^2}\\[1ex]
				\frac{2}{(\lambda -\mu )\,(\lambda+1)(\lambda-2)} & \frac{-2}{\lambda \,(\lambda -\mu )\,(\lambda+1)(\lambda-2)}\\[1ex]
				\frac{2\,\lambda +2\,\mu +\lambda \,\mu -\lambda ^2\,\mu }{\lambda \,(\lambda -\mu )(\lambda+1)(\lambda-2)^2} & \frac{\lambda ^2-\lambda -4}{\lambda \,(\lambda -\mu )(\lambda+1)(\lambda-2)^2}\\[1ex]
				\frac{\lambda }{\mu \,(\lambda -\mu )} & \frac{-1}{\mu \,(\lambda -\mu )}    
			\end{bmatrix}.
		\end{align*}%
	}%
	Note that $\hat N(\lambda,\mu)$ is regular for $(\lambda,\mu)\in \Omega=(\C\setminus\sigma(A_1)\times\C\setminus\sigma(A_2))\setminus\mathcal{P}(Q)$ with $$\mathcal{P}(Q)=\big\{(\lambda,\mu)~|~\lambda \mu(\lambda+1)(\lambda-2)(\lambda-\mu)=0\big\}.$$
	Following the Algorithm \ref{alg:F} we obtain the intersection
	$\im N_1(\lambda)\cap\im N_2(\mu)=\im Q(\lambda,\mu)$ with
	\begin{align*}
		Q(\lambda,\mu)&= (\mu -1 )  (\mu ^2-\mu +2 ) \left[\begin{array}{cc}      2 & \mu  \\ - 1 & 0\\  0 & 0\\  0 & 0 
		\end{array}\right].
	\end{align*}
	In this case $Q(\lambda,\mu)$ coincides with $P(\lambda,\mu)$ in \eqref{def_P_from_Q}.
	Obviously Condition \eqref{eq:ker_P} of Theorem \ref{thm:main_real} is not satisfied and thus it is not possible to select $n$ pairs $(\lambda_i,\mu_i)$ to generate $n$ linearly independent eigenvectors from the obtained intersection.
	
	Now we consider the kernel with the following restriction for the eigenvalues to be chosen: $\mu_i=\lambda_i$.
	We consider the new set 
	$$D_p=\left\{(\lambda,\mu)\in\{\C\setminus\sigma(A_1)\times\C\setminus\sigma(A_2)\}\,\big|\,\lambda=\mu\right\}.$$
	Note that $\hat N(\lambda,\mu)$ in \eqref{eq:ex_defect_Nhat} cannot be used to compute the intersection for $(\lambda,\mu)\in D_p$.
	But we can compute the intersection $\im N_1(\lambda)\cap\im N_2(\mu)$ via the  reduced  row echelon form of $\begin{bmatrix}
		N_1(\lambda) & N_2(\mu) \end{bmatrix}$ to 
	{ \small 
		\begin{multline*}
			\im P(\lambda,\lambda)=\\
			\Span\left\{\!
				\begin{bmatrix}
					-\lambda  ({\lambda}^2\!-\!\lambda\!+\!2 ) & 0 & 0\\ 
					2 \lambda &
					-(\lambda\!-\!1)(\lambda\!+\!1)(\lambda\!-\!2)
					& 2 \lambda  (\lambda\!-\!1 ) \\ 
					-\lambda & -2(\lambda\!-\!1) & -\lambda  (\lambda\!-\!1 ) \\
					-{\lambda}^2 & -2 \lambda  (\lambda\!-\!1 ) & -{\lambda}^2  (\lambda\!-\!1 )
				\end{bmatrix}\!\right\}\!.
			\end{multline*}
		}%
		%
		%
		%
		%
		Thus we obtain
		{ \small 
			\begin{align*}
				P(\lambda, \lambda)=
				C_0 + C_1\lambda + C_2\lambda^2 + C_3\lambda^3    
			\end{align*}
			with
			\begin{align*}
				C_0&=   \begin{bmatrix}
					0&0&0\\
					0&-2&0\\
					0& 2&0\\
					0&0&0
				\end{bmatrix},
				&&C_1=  \begin{bmatrix}
					-2 &0&0\\
					2&1&-2\\-1&-2&1\\0&2&0
				\end{bmatrix},\\
				C_2&= \begin{bmatrix}1&0&0\\0&2&2\\0&0&-1\\-1&-2&1\end{bmatrix},
				&& C_3= \begin{bmatrix}-1&0&0\\0&-1&0\\0&0&0\\0&0&-1\end{bmatrix}.
			\end{align*}
		}%
		As the $C_i$ above satisfy \eqref{eq:ker_P}, the given pairs are feedback rectifiable over $D_p$.
		In fact, \eqref{def:ferec} can be satisfied choosing any four distinct pairs $(\lambda_i,\mu_i)\in D_p$, and thus the switched system is indeed stabilizable via feedback rectification.
		
		Since $\dim \im P(\lambda,\lambda)=3$, the intersection provides a three-dimensional space for each $\lambda$ to choose the eigenvector from.
		For $\lambda_1=\mu_1=-1$ the corresponding eigenvectors can be chosen as a linear combination of the columns in
		\begin{align*}
			P(-1,-1)=
			\begin{bmatrix}
				4 & 0 & 0 \\
				-2 & 0 & 4\\
				1 & 4 & -2\\
				1 & -4 & 2        
			\end{bmatrix}.
		\end{align*}
		Choosing $c_1 = [1\;\;1\;\;1]^\top$, for instance, yields the eigenvector $v_1=[4\;\; 2\;\;3\;\; -1]^\top$.
		
	\end{ex}

\section{A note on uncontrollable modes}\label{sec:uncontrollable_modes}
The procedure proposed in Section \ref{sec:regular_case} can be applied to controllable as well as uncontrollable modes of the subsystems $(A_q,B_q)$.
Of course the uncontrollable eigenvalues have to be eigenvalues of the closed-loop system $A_q+B_qF_q$ and therefore cannot be re-assigned.
However the corresponding eigenvector is not affected and can be chosen within the limits of Theorem \ref{thm:main_cplx}.
In fact, the dimension of the kernel from which the eigenvector for the single subsystem can be chosen increases, as can be seen as follows.

Consider $(A_1,B_1)$ with the unique uncontrollable eigenvalue $\lambda_u$ such that
\begin{align*}
	\rk \begin{bmatrix}
		\lambda_u I -A_1 & B_1
	\end{bmatrix} = n -\nu(\lambda_u) < n,
\end{align*}
where $\nu(\lambda_u)$ denotes the geometric multiplicity of $\lambda_u$, i.e. the dimension of the eigenspace associated with the so-called uncontrollable mode $\lambda_u$.
Then $$\dim \ker \begin{bmatrix}
	\lambda_u I -A_1 & B_1
\end{bmatrix} = p + \nu(\lambda_u).$$

Note that the dimension of the kernel at $\lambda=\lambda_u$ is larger than the kernel for all other $\lambda\in\C$.
Therefore we describe the upper part of the kernel associated with the uncontrollable mode using $N_1^u(\lambda_u)\in\R^{n\times p+\nu(\lambda_u)}$ whereas for all $\lambda\neq\lambda_u$ it is described by $N_1(\lambda)\in\R^{n\times p}$.
This increase of dimension, however, does not cause any further restrictions on the rectifiablility of the pairs $(A_1,B_1)$, $(A_2,B_2)$ as the intersection of the spaces in Proposition \ref{prop:wieWHR} also increases as the following example shows.

\begin{ex} \label{ex:uncontrollable}
	Consider the system with uncontrollable but stable eigenvalue $\lambda_u=\mu_u=-1$ in each mode with
	{\small 
		\begin{align*}
			A_1=\begin{bmatrix}
				-1 & 0 & 0\\ 0 & 0 & 0\\ 0 & 0 & 1     
			\end{bmatrix}\!\!,\; 
			A_2=\begin{bmatrix} 
				-1 & 0 & 0\\ 1 & 0 & 0\\ 0 & 0 & 1  
			\end{bmatrix}\!\!,\;
			B_1=B_2=\begin{bmatrix}0 & 0\\ 0 & 1\\ 1 & 0     \end{bmatrix}\!.
		\end{align*}
	}%
	For the controllable eigenvalues we have
	{\small 
		\begin{align*}
			N_1(\lambda)\!=\!\begin{bmatrix} 0 & 0\\ 0 & 1\!-\!\lambda ^2\\ -\lambda   (\lambda \!+\!1 ) & 0  \end{bmatrix}\!,
			\;
			N_2(\mu)\!=\!\begin{bmatrix} 0 & 0\\ 0 & 1\!-\!\mu ^2\\ -\mu   (\mu \!+\!1 ) & 0  \end{bmatrix}
		\end{align*}
	}
	and 
	{\small 
		\begin{align*}
			M_1(\lambda)=\begin{bmatrix} \lambda ^3\!-\!\lambda  & 0\\ 0 & \lambda ^3\!-\!\lambda  \end{bmatrix}\!,
			\;
			M_2(\mu)=\begin{bmatrix} \mu ^3\!-\!\mu  & 0\\ 0 & \mu ^3\!-\mu  \end{bmatrix}\!.
		\end{align*}
	}%
	Observe, that both subsystems have a zero row in the kernel matrices $N_q$ corresponding to the uncontrollable mode and thus their intersection will not span the first component in $\R^3$.
	
	For the uncontrollable modes $\lambda_u=\mu_u=-1$ we have 
	{\small 
		\begin{align*}
			N^u_1(\lambda_u)=\begin{bmatrix} 1 & 0 & 0\\[0.5ex] 0 & 0 & \frac{\sqrt{2}}{2}\\[0.5ex] 0 & \frac{\sqrt{5}}{5} & 0 \end{bmatrix}\!,
			\;
			N^u_2(\mu_u)=\begin{bmatrix} -\frac{\sqrt{6}}{3} & 0 & 0\\[0.5ex] \frac{\sqrt{6}}{6} & 0 & \frac{\sqrt{2}}{2}\\[0.5ex] 0 & \frac{\sqrt{5}}{5} & 0 \end{bmatrix}\!.
		\end{align*}
	}%
	As both, $N^u_1(\lambda_u)$ and $N^u_2(\mu_u)$, span $\R^3$ their intersection still spans $\R^3$.
	Since the intersection of the controllable eigenvalues lacks the first component, we need to pair the uncontrollable eigenvalues $\lambda_u$ and $\mu_u$ and assign an eigenvector with non-zero first component.
\end{ex}

\section{Conclusion}
In this contribution we generalize results for switched systems of size at most
three to subsystems of arbitrary order.
Our approach discusses how to rectify the eigenstructure of two subsystems.
To this effect we introduce the notion of feedback rectifiable pairs, which states the existence of a feedback for every subsystem with the property that the closed-loop subsystems have the same eigenvectors.
It turns out that the eigenspaces of the subsystems may only be rectified if the assigned eigenvalues satisfy certain conditions. 
The formulated necessary and sufficient conditions essentially require that the intersection of two subspaces determined by the subsystems has to span the whole space. 
Moreover, we show that this subspace can be described by a polynomial matrix.
We propose a constructive procedure to calculate this polynomial matrix.
With this characterization at hand, the eigenvalues and eigenvectors for the closed-loop subsystems can be chosen and the respective feedback matrices that stabilize the switched system for arbitrary switching are readily obtained. 

The proposed methodology is not restricted to two subsystems.
In fact the generalization of Definition \ref{def:fbrect} and Theorem \ref{thm:stabilizable} is straightforward.
In Proposition \ref{prop:wieWHR} we need to expand the span to the intersection of all subsystems considered and apply Proposition \ref{prop:intersection} and \ref{cor:rref} iteratively. 
Of course the complexity explodes for large numbers of subsystems and the conditions may become very restrictive.

\bibliographystyle{alpha}
\bibliography{sample,stability_tidy}

\end{document}